\definecolor{backgroundcolor}{rgb}{0.9,0.8,0.8}
\numberwithin{equation}{section}
\newcommand{\vanish}[1]{}
\def\bbar#1{\setbox0=\hbox{$#1$}\dimen0=.2\ht0 \kern\dimen0 }
\newenvironment{romanenum}{\hfill \begin{enumerate}[label=({\roman*})]
}
{\end{enumerate}}
\DeclareSymbolFont{cyrletters}{OT2}{wncyr}{m}{n}
\DeclareMathSymbol{\Sha}{\mathalpha}{cyrletters}{"58}
\newcommand{\G}{{\mathbb G}}
\newcommand{\PP}{{\mathbb P}}
\newcommand{\Q}{{\mathbb Q}}
\newcommand{\ZZ}{{\mathbb Z}}
\newcommand{\Zhat}{{\hat{\ZZ}}}
\newcommand{\kbar}{{\bbar{k}}}
\def\bbar#1{\setbox0=\hbox{$#1$}\dimen0=.2\ht0 \kern\dimen0 \overline{\kern-\dimen0 #1}}
\newcommand{\Qbar}{{\overline{\mathbb Q}}} 
\newcommand{\Qab}{{\mathbb Q}^{\operatorname{ab}}}
\newcommand{\calA}{{\mathcal A}}
\newcommand{\calI}{{\mathcal I}}
\DeclareMathOperator{\Aut}{Aut}
\DeclareMathOperator{\Gal}{Gal}
\newcommand{\GalQab}{{\Gal}(\Qbar/\Qab)}
\newcommand{\GalQ}{{\Gal}(\Qbar/\Q)}
\newcommand{\Galk}{{\Gal}(\kbar/k)}
\newcommand{\GL}{\operatorname{GL}}
\newcommand{\SL}{\operatorname{SL}}
\newcommand{\PSL}{\operatorname{PSL}}
\newcommand{\cyc}{\operatorname{cyc}}
\newcommand{\Qcyc}{{\Q^{\cyc}}} 
\newcommand{\GalQcyc}{{\Gal}(\Qbar/\Qcyc)}
\newcommand{\normsbgp}{\vartriangleleft}
\newcommand{\isom}{\simeq}
\newcommand{\intersect}{\cap} 
\newcommand{\Union}{\bigcup} 
\newcommand{\Quo}{\operatorname{Quo}}
\definecolor{webcolor}{rgb}{0,0,1}
\definecolor{webbrown}{rgb}{.6,0,0}
\theoremstyle{definition}
\newtheorem{theorem}{Theorem}[section]
\newtheorem{lemma}[theorem]{Lemma}
\newtheorem{corollary}[theorem]{Corollary}
\newtheorem{proposition}[theorem]{Proposition}
\newtheorem{definition}[theorem]{Definition}
\newtheorem*{Mazur's Program B}{Mazur's Program B}
\theoremstyle{remark}
\newtheorem*{remark}{Remark}
\crefname{section}{§}{§§}
\crefname{lemma}{Lemma}{Lemmas}
\crefname{equation}{equation}{equations}
\crefname{theorem}{Theorem}{Theorems}
\crefname{proposition}{Proposition}{Propositions}
\date{\today}
\begin{document}

\title[]{On Possible Genus $0$ adelic Galois Images of non CM Elliptic Curves over $\Q$}
\author{Rakvi}

\begin{abstract}

Let $E$ be an elliptic curve defined over $\Q.$ Associated to $E$, there is an adelic Galois representation $\rho_E \colon \GalQ \to \GL_2(\Zhat).$ In this article, we give possibilities for groups generated by $\rho_E(\GalQ)$ and $-I$ as $E$ varies over non CM elliptic curves defined over $\Q$ such that the group $\pm \rho_E(\GalQ)$ has genus 0. However, our list is not minimal.
 
\end{abstract}
\maketitle
\setcounter{tocdepth}{1}

\section{Introduction}
Let $E$ be a non-CM elliptic curve defined over $\Q$. Fix an algebraic closure $\overline{\Q}$ of $\Q$. For a positive integer $N$, let $E[N]$ denote the $N$-torsion subgroup of $E(\overline{\Q}).$ As a $\ZZ/N\ZZ$-module $E[N]$ is free of rank 2. Choosing a basis of $E[N]$, the natural action of $\G_{\Q} \coloneqq \GalQ$ on $E[N]$ gives a representation,
\[ \rho_{E,N}\colon \G_{\Q} \to \Aut(E[N]) \simeq \GL_{2}(\ZZ/N \ZZ). \] By choosing compatible bases for $E[N]$ with $N \ge 1$, we get a representation
\[ \rho_{E}\colon \G_{\Q} \to \GL_{2}(\hat{\ZZ}) \] such that the composition of $\rho_E$ with the reduction modulo $N$ map $\GL_{2}(\hat{\ZZ}) \rightarrow \GL_{2}(\ZZ/N \ZZ)$ is $\rho_{E,N}.$
The aim of this article is to give a list of groups generated by $\rho_E(\GalQ)$ and $-I$ as $E$ varies over non CM elliptic curves $E$ defined over $\Q$ such that the modular curve associated to the group $\pm \rho_E(\GalQ)$ has genus $0.$ By $G^t$ we mean the group generated by transpose of members of $G.$

\subsection{Constraints on the Galois image} Serre's open image theorem \cite{MR0387283} tells us that the group $\rho_{E}(\G_{\Q})$ is open and hence of finite index in $\GL_{2}(\hat{\ZZ})$. Let $\chi_{\cyc} \colon \G_{\Q} \to \hat{\ZZ}^{\times}$ be the cyclotomic character that satisfies $\sigma(\zeta_n)=\zeta_n^{\chi_{\cyc}(\sigma)\mod n}$ for any integer $n \ge 1$, $n$-th root of unity $\zeta_n \in \Qbar$ and $\sigma \in \G_{\Q}$ and $\chi_{\cyc}=\det \circ \rho_{E}.$ Hence, \begin{equation}\label{eq:detsurjective}
    \det(\rho_{E}(\G_{\Q}))=\Zhat^{\times}.
\end{equation} By the Kronecker-Weber theorem, we also know that \begin{equation}\label{eq:commthick}
    [\rho_E(\G_{\Q}),\rho_E(\G_{\Q})]=\rho_E(\G_{\Q}) \intersect \SL_2(\Zhat)
\end{equation} because \[[\rho_E(\G_{\Q}),\rho_E(\G_{\Q})]=\rho_E(\GalQab)\] and \[\rho_E(\G_{\Q}) \intersect \SL_2(\Zhat)=\rho_E(\GalQcyc).\]

Let $G$ be a subgroup of $\GL_{2}(\hat{\ZZ})$ such that 
\begin{itemize}
     \item It is an open subgroup of $\GL_2(\Zhat).$
     \item It has full determinant, i.e., $\det(G)=\Zhat^{\times}.$
     \item The associated modular curve $(X_G,\pi_G)$ has genus $0$ and has a rational point, i.e., $X_G(\Q) \neq \emptyset.$
 \end{itemize}\label{conditions}

We represent our groups via modular curves associated to them, i.e., we describe morphisms $\pi_G$ explicitly and given $\pi_G$ a set of generators for group $G$ can be computed.

Define the following rational functions in $\Q(t).$ Here $v$ is a parameter that takes values in $\Q.$ 

\[\pi_1(t)= \scriptstyle {t^2}\]\[ \pi_{1,v}(t)=\scriptstyle vt^2\]

\[\pi_2(t)=\tfrac{t^2+\alpha}{t}, \pi_{2,v}(t)=\tfrac{vt^{2}-4\alpha t+\alpha t}{-t^{2}+vt-\alpha}\]

\[\pi_3(t)=\tfrac{t^3-3t+1}{t^2-t}\]
\[\pi_{3,v}(t)=\tfrac{(-v + 3)t^{3} + (-3v^{2} + 9v - 9)t^{2} + (-3v^{3} + 9v^{2} - 15v)t + (-v^{4} + 3v^{3} - 6v^{2} - v + 3)}{t^{3} + 2vt^{2} + (v^{2} + v - 3)t + (v^{2} - 3v + 1)}\]

\[\pi_4(t)=\tfrac{t^{4}-6t^{2}+1}{t^3-t}\]
\[\pi_{4,v}(t)=\tfrac{-vt^{4} + (8v + 16)t^{3} + (-18v - 96)t^{2} + (8v + 176)t + (7v - 96)}{t^{4} +(v - 8)t^{3} + (-6v + 18)t^{2} + (11v - 8)t + (-6v - 7)}\]

\[\pi_5(t)=\tfrac{t^4+\alpha^2}{t^2}, \pi_{5,v}(t)=\tfrac{(-3v\alpha^2 + v^3)\alpha^2t^4 + (8\alpha^2 - 4v^2)\alpha^2t^3 + 6v\alpha^2t^2 - 8\alpha^2t +v}{\alpha^4t^4 - 2v\alpha^2t^3 + (2\alpha^2 + v^2)t^2 - 2vt + 1}\]

\[\pi_6(t)=\tfrac{t^4 + 2t^2 + 1}{t^3-t}\]
\[\pi_{6,v}(t)=\tfrac{(-25v^3 + 160v^2 - 256v)t^4 + (40v^3 - 208v^2 + 256v)t^3+ (-26v^3 + 96v^2 - 64v)t^2+(8v^3 - 16v^2)t-v^3}{(6v^3 - 37v^2 + 64v - 64)t^4+(-11v^3 + 56v^2 - 32v)t^3+ (6v^3 - 30v^2)t^2+(-v^3 +8v^2)t-v^2}.\]

\subsection{Reading the tables} We now explain how to read tables in \cref{THM:FIRST} and \cref{THM:SECOND} by giving an example. For the label 2A-2A we look for the map $\pi(t) \in \Q(t)$ given in Table $1$ \cite{2105.14623} which is $t^2+1728.$ We then compute a map $J \in \Q(t)$ such that $\pi(t)=J \circ \pi_1(t).$ For 2A-2A, $J(t)=t+1728.$ The morphisms $\pi_G$ for groups $G$ satisfying conditions \ref{conditions} and containing $-I$ for which $G^t=\rho_E(\G_{\Q})$ for some non-CM elliptic curve $E$ defined over $\Q$ are $J \circ \pi_{1,v}=vt^2+1728$ for $v$ squarefree integers not equal to $\pm 1$ and $J(v) \notin \{0,1728,\infty\}.$ For $v=-1$, the group $G$ satisfying conditions \ref{conditions} and containing $-I$ with the morphism $\pi_G$ given by $-t^2+1728$ has the possibility that for some index $2$ subgroup $H$ of $G$ such that $\pm H=G$ there exists a non-CM elliptic curve $E$ defined over $\Q$ satisfying $H^t=\rho_E(\G_{\Q}).$

\begin{remark}

In the column where we give constraints on $v$, we always assume that $v$ avoids those values where the corresponding map $J(v)$ is equal to $0$, $1728$ or $\infty.$
\end{remark}

Before stating our main theorems, we set some notation. Let $M$ be a positive integer. We denote the field $\Q(\zeta_M)$ by $K_M$ and the compositum of all extensions $\{K_{M^n}\}$ for $n \ge 1$ by $K_{M^{\infty}}.$ 

\begin{theorem}\label{THM:FIRST}
 
If $G$ is an open subgroup of $\GL_2(\Zhat)$ satisfying conditions \ref{conditions}, containing $-I$ and there exists an elliptic curve $E$ over $\Q$ such that $\rho_E(\G_{\Q})=G^t$, then $(X_G,\pi_G)$ is isomorphic to exactly one of the following entries in the following table. Conversely, for each group listed in the table below there exists a non CM elliptic curve $E$ over $\Q$ such that $\rho_E(\G_{\Q})=G^t.$

\begin{table}[H] 
\begin{small}
\begin{tabular}{|p{1.5 cm}|p{1.5 cm}|p{2 cm}|p{12 cm}|}\hline 

{Label} & {$\alpha$} & {$\pi_{i,v}$} & {$v$} \\ \hline 

2A-2A   &    {}        & 1 &  Squarefree integer not equal to $\pm 1$\\ \hline

4D-4A   &    {-1/2}  &  2& $v^2+2 \notin (\Q^{\times})^2$, $\Q(\sqrt{v^2+2}) \intersect K_{2^{\infty}}=\Q$ \\ \hline

4E-4A  & {1} & 5& $\Q(\sqrt{\frac{v+\sqrt{v^2-4}}{2}})$ is an extension of degree $4$ over $\Q$, \\ {}&{}&{}& $\Q(\sqrt{\frac{v+\sqrt{v^2-4}}{2}}) \intersect K_{2^{\infty}}=\Q$ \\ \hline
6A-6A   & {} & {}& {} \\ \hline

8E-16A  & {2} & 2& $v^2-8 \notin (\Q^{\times})^2$, $\Q(\sqrt{v^2-8}) \intersect K_{2^{\infty}}=\Q$ \\ \hline

8E-16B  & {1/2} & 2& $v^2-2 \notin (\Q^{\times})^2$, $\Q(\sqrt{v^2-2}) \intersect K_{2^{\infty}}=\Q$ \\ \hline

10A-10A & {} & 1& Squarefree integer not equal to $\pm 1$\\ \hline

10E-10A   & {} & {}& {} \\ \hline

16F-32A  & {1/2} & 2& $\Q(\sqrt{v^2-2}) \intersect K_{2^{\infty}}=\Q$ \\ \hline

16F-32B  & {2} &  2& $\Q(\sqrt{v^2-8}) \intersect K_{2^{\infty}}=\Q$ \\ \hline

\end{tabular}
\end{small}

\end{table}

\end{theorem}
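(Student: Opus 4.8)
The plan is to start from the classification of genus-$0$ modular curves with rational points in \cite{2105.14623} and then impose the two arithmetic conditions that any adelic Galois image must satisfy. Since $G$ is open, has full determinant, contains $-I$, and $X_G$ has genus $0$ with $X_G(\Q)\neq\emptyset$, we have $X_G\cong\PP^1_\Q$, and up to a quadratic twist of the cover $X_G\to X(1)$ the pair $(X_G,\pi_G)$ is one of the finitely many recorded in \cite{2105.14623}; this is the source of the labels, the templates $\pi_i$, and the auxiliary maps $J$ with $\pi=J\circ\pi_i$. If in addition $\rho_E(\G_{\Q})=G^t$ for an elliptic curve $E/\Q$, then \eqref{eq:commthick} forces $[G,G]=G\cap\SL_2(\Zhat)$, a condition unchanged by transpose. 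First I would run through the list of \cite{2105.14623} and discard every group for which this commutator identity fails; reducing modulo a suitable level turns each check into a finite group computation, and what remains is exactly the ten labels above together with those of \cref{THM:SECOND}. This gives the forward implication apart from identifying which twist inside each label actually occurs, which is what the $v$-columns encode.

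For a fixed label the group $G$ is one member of a family $\{G_v\}$ having the same image at the relevant prime power but differing by a quadratic twist of $X_G\to X(1)$; the maps $\pi_{i,v}\in\Q(t)$ are chosen so that $J\circ\pi_{i,v}$ is the $j$-map of $G_v$, and distinct squarefree $v$ give pairwise non-conjugate $G_v$ (no $\phi\in\PGL_2(\Q)$ relates $J\circ\pi_{i,v}$ to $J\circ\pi_{i,v'}$). For each template I would read $G_v\le\GL_2(\Zhat)$ off $\pi_{i,v}$ and determine exactly when $G_v$ still satisfies conditions \ref{conditions}, contains $-I$, and obeys $[G_v,G_v]=G_v\cap\SL_2(\Zhat)$. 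The nondegeneracy clauses --- that a field such as $\Q(\sqrt{v^2+2})$ has its expected degree over $\Q$ --- rule out the $v$ at which the level of $G_v$ collapses, and the disjointness clauses $\Q(\sqrt{\,\cdot\,})\cap K_{2^{\infty}}=\Q$ say that the quadratic field introduced by the twist is linearly disjoint from $K_{2^{\infty}}$, so the twist creates no entanglement enlarging $G_v\cap\SL_2(\Zhat)$ past $[G_v,G_v]$. For $v$ outside the stated ranges, $G_v$ is either another entry of the table or fails the commutator identity and hence is realized by no elliptic curve.

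For the converse I must, for each surviving $(G_v,\pi_{G_v})$, exhibit a non-CM $E/\Q$ with $\rho_E(\G_{\Q})=G_v^t$. Since $X_{G_v}\cong\PP^1_\Q$, for $t_0\in\Q$ away from the finite set with $\pi_{G_v}(t_0)\in\{0,1728,\infty\}$ I take $E$ in the family of quadratic twists of the elliptic curve with $j$-invariant $\pi_{G_v}(t_0)$; then $\rho_E(\G_{\Q})$ is conjugate to a subgroup of $G_v^t$, and, because $-I\in G_v$, it equals $G_v^t$ for a generic choice of $t_0$ and of the twist, the only systematic failure being a drop to an index-$2$ subgroup $H$ with $\pm H=G_v$, governed by whether a fixed $f_H\in\Q(t)^{\times}$ lies in $(\Q(t)^{\times})^2$. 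Using the normalisation of \cite{2105.14623} one checks that every such $f_H$ is a nonsquare in $\Q(t)^{\times}$, so a generic choice yields $\rho_E(\G_{\Q})=G_v^t$; non-constancy of $\pi_{G_v}$ and finiteness of the set of CM $j$-invariants let $E$ be taken non-CM. The degenerate values are $v=\pm1$ in the $vt^2+1728$ family, where $f_H$ becomes a square --- for $v=-1$ because $\Q(\sqrt{\Delta_E})=\Q(\sqrt{-1})\subseteq K_{2^{\infty}}$ is forced along the whole family --- so there the image drops to $H^t$ with $-I\notin H$, which is why those $v$ are excluded here and handled in \cref{THM:SECOND}.

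The main obstacle is the middle step: correctly attaching $G_v$ to each twisted map $\pi_{i,v}$ and proving that the displayed conditions on $v$ are sharp. This needs tight control of the $2$-adic image --- and, for the level-$6$ and level-$10$ labels, of the $3$- and $5$-adic images as well --- together with its entanglement with abelian extensions, since a coincidence between, say, $\Q(\sqrt{v^2-2})$ and a quadratic subfield of $K_{2^{\infty}}$ would enlarge $G_v\cap\SL_2(\Zhat)$ and violate \eqref{eq:commthick}. One must also confirm, from the classification of \cite{2105.14623}, that no label beyond the ten listed survives the commutator filter and that the stated $\alpha$ and $v$-ranges are the right ones; in particular that no prime $\geq 7$ divides the level of any admissible $G$.
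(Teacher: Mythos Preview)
Your forward direction is on the right track and matches the paper's strategy: filter the families of \cite{2105.14623} by the commutator identity $[G,G]=G\cap\SL_2(\Zhat)$. However, you gloss over the fact that each label in \cite{2105.14623} indexes an \emph{infinite} family $\{G_{\phi_v}\}$, so ``reducing modulo a suitable level'' does not by itself give a finite computation. The paper handles this with a structural lemma: when the conductor $M_v$ of the twist $\phi_v$ acquires a prime not dividing the level of the base group $G^i$, the group $G_{\phi_v}$ is a fibered product of $G^i_0$ with an abelian factor, and a Lang--Trotter-type result forces $[G_{\phi_v},G_{\phi_v}]=[G^i_0,G^i_0]$. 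This reduces the commutator check to finitely many $v$ per family, after which one computes with the algorithm of \cite[Lemmas~4.11--4.12]{1508.07663}. Your proposal needs this reduction to actually be a proof rather than a programme.

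The genuine gap is in your converse. You assert that for generic $t_0$ the image $\rho_E(\G_\Q)$ equals $G_v^t$, with ``the only systematic failure being a drop to an index-$2$ subgroup $H$ with $\pm H=G_v$''. This claim is unjustified: a priori the image could fall into any proper closed subgroup of $G_v^t$ with full determinant, and Hilbert irreducibility at a fixed level does not by itself control the image at \emph{all} levels simultaneously or rule out adelic entanglements. The paper avoids this entirely: its \cref{commutator condition} shows that the commutator identity $[G^t,G^t]=G^t\cap\SL_2(\Zhat)$ is not just necessary but \emph{sufficient} for the existence of $E/\Q$ with $\rho_E(\G_\Q)=G^t$, by invoking \cite[Theorem~2.11]{MR3350106}, which produces infinitely many $E$ with $\rho_E(\G_\Q)\subseteq G^t$ and $[\rho_E(\G_\Q),\rho_E(\G_\Q)]=[G^t,G^t]$; full determinant then forces equality. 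So once the commutator identity is verified for the listed groups, the converse is immediate---no twist-by-twist or $f_H$ analysis is needed. You should either cite Jones' result directly or supply the missing adelic argument; the latter essentially amounts to reproving that theorem.
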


\begin{theorem}\label{THM:SECOND}
 If $E$ is an elliptic curve defined over $\Q$ without CM such that $-I \notin \rho_E(\G_{\Q})$ and $\pm \rho_E(\G_{\Q})$ has genus 0, then $\pm \rho_E(\G_{\Q})$ is isomorphic to exactly one of the following entries. 

\begin{table}[H] 
\begin{small}
\begin{tabular}{|p{1.5 cm}|p{1.5 cm}|p{2 cm}|p{12 cm}|}\hline 

{Label} & {$\alpha$} & {$\pi_{i,v}$}&{$v$} \\ \hline 

2A-2A & {} & 1& -1 \\ \hline
2C-2A & {} & 1 & $v$ is a squarefree integer not equal to $\pm 1$ \\ \hline
2C-2A & {} & 3&$\Q(\tfrac{\sqrt[3]{2v^3 - 9v^2 + 3\sqrt{3}\sqrt{-v^4 + 6 v^3 - 27 v^2 + 54 v - 81} + 27v-27}}{3 \sqrt[3]{2}}$\\
{}&{}&{}& $-\tfrac{\sqrt[3]{2} (-v^2 + 3 v-9)}{3\sqrt[3]{ 2 v^3-9 v^2 + 3 \sqrt{3} \sqrt{-v^4 + 6 v^3-27 v^2 + 54 v-81} + 27 v-27}})$\\
{}&{}&{}& is a degree $3$ extension of $\Q$ \\ \hline

3C-3A & {-1/3}& 2& $v^2+4/3 \notin (\Q^{\times})^2$, $\Q(\sqrt{v^2+4/3}) \intersect K_{3^{\infty}}=\Q$ \\ \hline

3D-3A & {3} & 2& $v^2-12 \notin (\Q^{\times})^2$, $\Q(\sqrt{v^2-12}) \intersect K_{3^{\infty}}=\Q$ \\ \hline

4A-4A &{}&{}&{} \\ \hline

4B-4A &{} & 1& $v$ is a squarefree integer not equal to $\pm 1$ \\ \hline

4C-4A & {} & 1& $v$ is a squarefree integer not equal to $\pm 1$ \\ \hline

4D-4A & {1/2} & 2& $v \in \{0,1/2\}$ \\ \hline

4G-4A & {2} & 2& $v^2-8 \notin (\Q^{\times})^2$, $\Q(\sqrt{v^2-8}) \intersect K_{2^{\infty}}=\Q$ \\ \hline

4G-4A & {1} & 5&  $\Q(\sqrt{\frac{v+\sqrt{v^2-4}}{2}})$ is an extension of degree $4$ over $\Q$, \\ {}&{}&{}& $\Q(\sqrt{\frac{v+\sqrt{v^2-4}}{2}}) \intersect K_{2^{\infty}}=\Q$ \\ \hline

4G-4A & {} & 4& $\Q(\tfrac{-\sqrt{v^2 + 16}}{4} + \tfrac{1}{2} \sqrt{\tfrac{v^2}{2} - \tfrac{v^3 + 16 v}{2 \sqrt{v^2 + 16}} + 8})$\\
{}&{}&{}& is a degree 4 extension of $\Q$ \\
{}&{} &{}&and its intersection with $K_{2^{\infty}}=\Q$ \\ \hline 

4G-4A & {} & 6& $\Q(\tfrac{-\sqrt{v^2 - 16}}{4} + \tfrac{1}{2} \sqrt{\tfrac{v^2}{2} - \tfrac{v^3 - 16 v}{2 \sqrt{v^2 - 16}}})$\\
{}&{}& {}& is a degree 4 extension of $\Q$ \\
{}&{} & {}& and its intersection with $K_{2^{\infty}}=\Q$ \\ \hline 

4G-4B & {1/2} & 2& $v^2-2 \notin (\Q^{\times})^2$, $\Q(\sqrt{v^2-2}) \intersect K_{2^{\infty}}=\Q$ \\ \hline

4G-4B & {1/2} & 5&  $\Q(\sqrt{\frac{v+\sqrt{v^2-1}}{2}})$ is an extension of degree $4$ over $\Q$, \\ {}&{}&{}& $\Q(\sqrt{\frac{v+\sqrt{v^2-1}}{2}}) \intersect K_{2^{\infty}}=\Q$ \\ \hline

4G-8B & {1/2} & 2& $v^2-2 \notin (\Q^{\times})^2$, $\Q(\sqrt{v^2-2}) \intersect K_{2^{\infty}}=\Q$ \\ \hline

4G-8B & {-1/2} & 2& $v^2+2 \notin (\Q^{\times})^2$, $\Q(\sqrt{v^2+2}) \intersect K_{2^{\infty}}=\Q$ \\ \hline

4G-8B & {1/2} & 5 &  $\Q(\sqrt{\frac{v+\sqrt{v^2-1}}{2}})$ is an extension of degree $4$ over $\Q$, \\ {}&{}&{}& $\Q(\sqrt{\frac{v+\sqrt{v^2-1}}{2}}) \intersect K_{2^{\infty}}=\Q$ \\ \hline

4G-8C & {2} & 2& $v^2-8 \notin (\Q^{\times})^2$, $\Q(\sqrt{v^2-8}) \intersect K_{2^{\infty}}=\Q$ \\ \hline

4G-8C & {1} & 5 &  $\Q(\sqrt{\frac{v+\sqrt{v^2-4}}{2}})$ is an extension of degree $4$ over $\Q$, \\ {}&{}&{}& $\Q(\sqrt{\frac{v+\sqrt{v^2-4}}{2}}) \intersect K_{2^{\infty}}=\Q$ \\ \hline

4G-8C & {} & 4 & $\Q(\tfrac{-\sqrt{v^2 + 16}}{4} + \tfrac{1}{2} \sqrt{\tfrac{v^2}{2} - \tfrac{v^3 + 16 v}{2 \sqrt{v^2 + 16}} + 8})$\\
{}&{}& {}&is a degree 4 extension of $\Q$ \\
{}&{} &{}&and its intersection with $K_{2^{\infty}}=\Q$ \\ \hline 

4G-8C & {} & 6 & $\Q(\tfrac{-\sqrt{v^2 - 16}}{4} + \tfrac{1}{2} \sqrt{\tfrac{v^2}{2} - \tfrac{v^3 - 16 v}{2 \sqrt{v^2 - 16}}})$\\
{}&{}& {}&is a degree 4 extension of $\Q$ \\
{}&{} & {}&and its intersection with $K_{2^{\infty}}=\Q$ \\ \hline 
\end{tabular}
\end{small}
\end{table}

\begin{table}[H] 
\begin{small}
\begin{tabular}{|p{1.5 cm}|p{1.5 cm}|p{2 cm}|p{12 cm}|}\hline 

{Label} & {$\alpha$} & {$\pi_{i,v}$}&{$v$} \\ \hline 

4G-8D & {1/2} & 2& $v^2-2 \notin (\Q^{\times})^2$, $\Q(\sqrt{v^2-2}) \intersect K_{2^{\infty}}=\Q$ \\ \hline
4G-8D & {-1/2} & 2& $v^2+2 \notin (\Q^{\times})^2$, $\Q(\sqrt{v^2+2}) \intersect K_{2^{\infty}}=\Q$ \\ \hline

4G-8D & {1/2} & 5 &  $\Q(\sqrt{\frac{v+\sqrt{v^2-1}}{2}})$ is an extension of degree $4$ over $\Q$, \\ {}&{}&{}& $\Q(\sqrt{\frac{v+\sqrt{v^2-1}}{2}}) \intersect K_{2^{\infty}}=\Q$ \\ \hline

4G-8E & {2} & 2 & $v^2-8 \notin (\Q^{\times})^2$, $\Q(\sqrt{v^2-8}) \intersect K_{2^{\infty}}=\Q$ \\ \hline
4G-8E & {-2} & 2 & $v^2-8 \notin (\Q^{\times})^2$, $\Q(\sqrt{v^2-8}) \intersect K_{2^{\infty}}=\Q$ \\ \hline

4G-8E & {2} & 5 &  $\Q(\sqrt{\frac{v+\sqrt{v^2-16}}{2}})$ is an extension of degree $4$ over $\Q$, \\ {}&{}&{}& $\Q(\sqrt{\frac{v+\sqrt{v^2-16}}{2}}) \intersect K_{2^{\infty}}=\Q$ \\ \hline

4G-8F & {2} & 2 & $v^2-8 \notin (\Q^{\times})^2$, $\Q(\sqrt{v^2-8}) \intersect K_{2^{\infty}}=\Q$ \\ \hline

4G-8F & {} & 6 & $\Q(\tfrac{-\sqrt{v^2 - 16}}{4} + \tfrac{1}{2} \sqrt{\tfrac{v^2}{2} - \tfrac{v^3 - 16 v}{2 \sqrt{v^2 - 16}}})$\\
{}&{}& {}&is a degree 4 extension of $\Q$ \\
{}&{} & {}&and its intersection with $K_{2^{\infty}}=\Q$ \\ \hline 

4G-16A & {} & 4 & $\Q(\tfrac{-\sqrt{v^2 + 16}}{4} + \tfrac{1}{2} \sqrt{\tfrac{v^2}{2} - \tfrac{v^3 + 16 v}{2 \sqrt{v^2 + 16}} + 8})$\\
{}&{}& {}& is a degree 4 extension of $\Q$ \\
{}&{} &{}& and its intersection with $K_{2^{\infty}}=\Q$ \\ \hline 

5A-5A & {} & {} & {} \\ \hline

5F-15A & {-5} & {} & {} \\ \hline

5G-5A  & {5} & 2 &  $v^2-20 \notin (\Q^{\times})^2$, $\Q(\sqrt{v^2-20}) \intersect K_{5^{\infty}}=\Q$ \\ \hline

5H-15A & {-5} & 2 & $\Q(\sqrt{v^2+20}) \intersect K_{15}=\Q$ \\ \hline

6B-6A & {} & {} & {} \\ \hline

6C-6A & {} & 1 & $v$ is a squarefree integer not equal to $\pm 1$ \\ \hline

6E-6A & {} & 1 &  $v$ satisfies $\Q(\sqrt{v}) \intersect K_{6}=\Q$ \\ \hline

6E-12A & {} & 1& $v$ satisfies $\Q(\sqrt{v}) \intersect K_{12}=\Q$  \\ \hline

6E-24A & {} & 1 & $v$ satisfies $\Q(\sqrt{v}) \intersect K_{24}=\Q$  \\ \hline

6E-24B & {} & 1 & $v$ satisfies $\Q(\sqrt{v}) \intersect K_{24}=\Q$  \\ \hline

6J-6A & {} & {} & {} \\ \hline

8E-16A & {2} & 2 & {3} \\ \hline
8E-16B & {1/2} & 2 & {3/2} \\ \hline
8F-8A  & {} & {} & {}\\ \hline
8G-8A  & {2} & 2&  $v^2-8 \notin (\Q^{\times})^2$, $\Q(\sqrt{v^2-8}) \intersect K_{2^{\infty}}=\Q$ \\ \hline

8G-8A  & {1} & 5 &  $\Q(\sqrt{\frac{v+\sqrt{v^2-4}}{2}})$ is an extension of degree $4$ over $\Q$, \\ {}&{}&{}& $\Q(\sqrt{\frac{v+\sqrt{v^2-4}}{2}}) \intersect K_{2^{\infty}}=\Q$ \\ \hline

8G-8A & {} & 4 & $\Q(\tfrac{-\sqrt{v^2 + 16}}{4} + \tfrac{1}{2} \sqrt{\tfrac{v^2}{2} - \tfrac{v^3 + 16 v}{2 \sqrt{v^2 + 16}} + 8})$\\
{}&{}& {}& is a degree 4 extension of $\Q$ \\
{}&{} &{}& and its intersection with $K_{2^{\infty}}=\Q$ \\ \hline

8G-8A & {} & 6 & $\Q(\tfrac{-\sqrt{v^2 - 16}}{4} + \tfrac{1}{2} \sqrt{\tfrac{v^2}{2} - \tfrac{v^3 - 16 v}{2 \sqrt{v^2 - 16}}})$\\

{}&{}& {}&is a degree 4 extension of $\Q$ \\
{}&{} &{}&and its intersection with $K_{2^{\infty}}=\Q$ \\ \hline
\end{tabular}
\end{small}
\end{table}

\begin{table}[H] 
\begin{small}
\begin{tabular}{|p{1.5 cm}|p{1.5 cm}|p{2 cm}|p{12 cm}|}\hline 

{Label} & {$\alpha$} & {$\pi_{i,v}$}&{$v$} \\ \hline 

8H-8A  & {} & 1&$v$ is squarefree integer and $\Q(\sqrt{v}) \intersect K_{2^{\infty}}=\Q$ \\ \hline

8H-8A  & {2} & 5 & $\Q(\sqrt{\frac{v+\sqrt{v^2-16}}{2}})$ is an extension of degree $4$ over $\Q$, \\ {}&{}&{}& $\Q(\sqrt{\frac{v+\sqrt{v^2-16}}{2}}) \intersect K_{2^{\infty}}=\Q$ \\ \hline

8N-16A & {1} & 2& $v^2-4 \notin (\Q^{\times})^2$, $\Q(\sqrt{v^2-4}) \intersect K_{2^{\infty}}=\Q$ \\ \hline

8N-16A & {-1} &  2 & $v^2+4 \notin (\Q^{\times})^2$, $\Q(\sqrt{v^2+4}) \intersect K_{2^{\infty}}=\Q$ \\ \hline

8N-16A & {1} &5& $\Q(\sqrt{\frac{v+\sqrt{v^2-4}}{2}})$ is an extension of degree $4$ over $\Q$, \\ {}&{}&{}& $\Q(\sqrt{\frac{v+\sqrt{v^2-4}}{2}}) \intersect K_{2^{\infty}}=\Q$ \\ \hline

8N-16B & {1/2} & 2& $v^2-2 \notin (\Q^{\times})^2$, $\Q(\sqrt{v^2-2}) \intersect K_{2^{\infty}}=\Q$ \\ \hline

8N-16B & {-1/2} & 2& $v^2+2 \notin (\Q^{\times})^2$, $\Q(\sqrt{v^2+2}) \intersect K_{2^{\infty}}=\Q$ \\ \hline

8N-16B & {1/2} & 5& $\Q(\sqrt{\frac{v+\sqrt{v^2-1}}{2}})$ is an extension of degree $4$ over $\Q$, \\ {}&{}&{}& $\Q(\sqrt{\frac{v+\sqrt{v^2-1}}{2}}) \intersect K_{2^{\infty}}=\Q$ \\ \hline

8N-16C & {} & 1&$v$ is a squarefree integer and $\Q(\sqrt{v}) \intersect K_{2^{\infty}}=\Q$ \\ \hline

8N-16C & {1/2} & 2&$v^2-2 \notin (\Q^{\times})^2$, $\Q(\sqrt{v^2-2}) \intersect K_{2^{\infty}}=\Q$ \\ \hline

8N-16C & {-1/2} & 2&  $v^2+2 \notin (\Q^{\times})^2$, $\Q(\sqrt{v^2+2}) \intersect K_{2^{\infty}}=\Q$ \\ \hline

8N-16C & {1/2} & 5& $\Q(\sqrt{\frac{v+\sqrt{v^2-1}}{2}})$ is an extension of degree $4$ over $\Q$, \\ {}&{}&{}& $\Q(\sqrt{\frac{v+\sqrt{v^2-1}}{2}}) \intersect K_{2^{\infty}}=\Q$ \\ \hline

8N-16D & {} & 1& $v$ is a squarefree integer and $\Q(\sqrt{v}) \intersect K_{2^{\infty}}=\Q$ \\ \hline

8N-16D & {1} & 2& $v^2-4 \notin (\Q^{\times})^2$, $\Q(\sqrt{v^2-4}) \intersect K_{2^{\infty}}=\Q$ \\ \hline

8N-16D & {2} & 2 & $v^2-8 \notin (\Q^{\times})^2$, $\Q(\sqrt{v^2-8}) \intersect K_{2^{\infty}}=\Q$ \\ \hline

8N-16D & {1} & 5& $\Q(\sqrt{\frac{v+\sqrt{v^2-4}}{2}})$ is an extension of degree $4$ over $\Q$, \\ {}&{}&{}& $\Q(\sqrt{\frac{v+\sqrt{v^2-4}}{2}}) \intersect K_{2^{\infty}}=\Q$ \\ \hline

8N-16D & {} & 6& $\Q(\tfrac{-\sqrt{v^2 - 16}}{4} + \tfrac{1}{2} \sqrt{\tfrac{v^2}{2} - \tfrac{v^3 - 16 v}{2 \sqrt{v^2 - 16}}})$\\
{}&{}&{}& is a degree 4 extension of $\Q$ \\
{}&{} & {}&and its intersection with $K_{2^{\infty}}=\Q$ \\ \hline

8N-32A & {-1} & 2&$\Q(\sqrt{v^2+2}) \intersect K_{2^{\infty}}=\Q$ \\ \hline

8N-32A & {}  & 4& $\Q(\tfrac{-\sqrt{v^2 + 16}}{4} + \tfrac{1}{2} \sqrt{\tfrac{v^2}{2} - \tfrac{v^3 + 16 v}{2 \sqrt{v^2 + 16}} + 8})$\\
{}&{}&{}& is a degree 4 extension of $\Q$ \\
{}&{} &{}&and its intersection with $K_{2^{\infty}}=\Q$ \\ \hline

8N-160A & {-1} & 2&$\Q(\sqrt{v^2+2}) \intersect K_{2^{\infty}}=\Q$ \\ \hline

8N-160A & {}  & 4&$\Q(\tfrac{-\sqrt{v^2 + 16}}{4} + \tfrac{1}{2} \sqrt{\tfrac{v^2}{2} - \tfrac{v^3 + 16 v}{2 \sqrt{v^2 + 16}} + 8})$\\
{}&{}& {}&is a degree 4 extension of $\Q$ \\
{}&{} &{}&and its intersection with $K_{2^{\infty}}=\Q$ \\ \hline

\end{tabular}
\end{small}
\end{table}

\begin{table}[H] 
\begin{small}
\begin{tabular}{|p{1.5 cm}|p{1.5 cm}|p{2 cm}|p{12 cm}|}\hline 

{Label} & {$\alpha$} & {$\pi_{i,v}$}&{$v$} \\ \hline 

8N-160B & {-1} & 2&$\Q(\sqrt{v^2+2}) \intersect K_{2^{\infty}}=\Q$ \\ \hline

8N-160B & {}  & 4&$\Q(\tfrac{-\sqrt{v^2 + 16}}{4} + \tfrac{1}{2} \sqrt{\tfrac{v^2}{2} - \tfrac{v^3 + 16 v}{2 \sqrt{v^2 + 16}} + 8})$\\
{}&{}& {}&is a degree 4 extension of $\Q$ \\
{}&{} &{}&and its intersection with $K_{2^{\infty}}=\Q$ \\ \hline

8P-8A & {2} &2& {} \\ \hline
8P-8B & {1/2} &2& {} \\ \hline

9D-9A & {-3} & 2&$v^2+12 \notin (\Q^{\times})^2$, $\Q(\sqrt{v^2+12}) \intersect K_{3^{\infty}}=\Q$ \\ \hline

9F-9A & {} & {}&{} \\ \hline

9H-9A & {3} & 2&$v^2-12 \notin (\Q^{\times})^2$, $\Q(\sqrt{v^2-12}) \intersect K_{3^{\infty}}=\Q$ \\ \hline
10D-10A & {5} & 2& {} \\ \hline

12B-12B & {}  & 1&$\Q(\sqrt{v}) \intersect K_{12^{\infty}}=\Q$ \\ \hline
12B-12D & {}  & 1&$\Q(\sqrt{v}) \intersect K_{12^{\infty}}=\Q$ \\ \hline
12B-24A & {}  & 1&$\Q(\sqrt{v}) \intersect K_{12^{\infty}}=\Q$ \\ \hline
12B-24D & {}  & 1&$\Q(\sqrt{v}) \intersect K_{12^{\infty}}=\Q$ \\ \hline

12F-12A & {-2}  & 2&{} \\ \hline

12H-12A & {}   & 1&$v$ is a squarefree integer and $\Q(\sqrt{v}) \intersect K_{12^{\infty}}=\Q$ \\ \hline

12H-12A & {-1} & 2& $v^2+12 \notin (\Q^{\times})^2$, $\Q(\sqrt{v^2+4}) \intersect K_{12^{\infty}}=\Q$ \\ \hline

12H-12A & {1}  & 5& $\Q(\sqrt{\frac{v+\sqrt{v^2-4}}{2}})$ is an extension of degree $4$ over $\Q$, \\ {}&{}&{}& $\Q(\sqrt{\frac{v+\sqrt{v^2-4}}{2}}) \intersect K_{12^{\infty}}=\Q$ \\ \hline

16G-16A & {} & 1& $v$ is a squarefree integer and $\Q(\sqrt{v}) \intersect K_{2^{\infty}}=\Q$ \\ \hline

16G-16A & {1/2} & 2& $v^2-2 \notin (\Q^{\times})^2$, $\Q(\sqrt{v^2-2}) \intersect K_{2^{\infty}}=\Q$ \\ \hline

16G-16A & {-1/2} & 2& $v^2+2 \notin (\Q^{\times})^2$, $\Q(\sqrt{v^2+2}) \intersect K_{2^{\infty}}=\Q$ \\ \hline

16G-16A & {1/2}  & 5& $\Q(\sqrt{\frac{v+\sqrt{v^2-1}}{2}})$ is an extension of degree $4$ over $\Q$, \\ {}&{}&{}& $\Q(\sqrt{\frac{v+\sqrt{v^2-1}}{2}}) \intersect K_{2^{\infty}}=\Q$ \\ \hline

16G-16C & {} & 1& $v$ is a squarefree integer and $\Q(\sqrt{v}) \intersect K_{2^{\infty}}=\Q$ \\ \hline

16G-16C & {1} & 2&$v^2-4 \notin (\Q^{\times})^2$, $\Q(\sqrt{v^2-4}) \intersect K_{2^{\infty}}=\Q$ \\ \hline

16G-16C & {2} & 2&$v^2-8 \notin (\Q^{\times})^2$, $\Q(\sqrt{v^2-8}) \intersect K_{2^{\infty}}=\Q$ \\ \hline

16G-16C & {1}  & 5& $\Q(\sqrt{\frac{v+\sqrt{v^2-4}}{2}})$ is an extension of degree $4$ over $\Q$, \\ {}&{}&{}& $\Q(\sqrt{\frac{v+\sqrt{v^2-4}}{2}}) \intersect K_{2^{\infty}}=\Q$ \\ \hline

16G-16C & {} & 6&$\Q(\tfrac{-\sqrt{v^2 - 16}}{4} + \tfrac{1}{2} \sqrt{\tfrac{v^2}{2} - \tfrac{v^3 - 16 v}{2 \sqrt{v^2 - 16}}})$\\ \hline
{}&{}&{}& is a degree 4 extension of $\Q$ \\
{}&{} &{}& and its intersection with $K_{2^{\infty}}=\Q$ \\ \hline
\end{tabular}
\end{small}
\end{table}

\begin{table}[H] 
\begin{small}
\begin{tabular}{|p{1.5 cm}|p{1.5 cm}|p{2 cm}|p{12 cm}|}\hline 

{Label} & {$\alpha$} & {$\pi_{i,v}$}&{$v$} \\ \hline 

16G-16E & {2} & 2& $v^2-8 \notin (\Q^{\times})^2$, $\Q(\sqrt{v^2-8}) \intersect K_{2^{\infty}}=\Q$ \\ \hline

16G-16E & {-1} & 2& $v^2+4 \notin (\Q^{\times})^2$, $\Q(\sqrt{v^2+4}) \intersect K_{2^{\infty}}=\Q$ \\ \hline

16G-16E & {} & 6&$\Q(\tfrac{-\sqrt{v^2 - 16}}{4} + \tfrac{1}{2} \sqrt{\tfrac{v^2}{2} - \tfrac{v^3 - 16 v}{2 \sqrt{v^2 - 16}}})$\\
{}&{}&{}& is a degree 4 extension of $\Q$ \\
{}&{} &{}& and its intersection with $K_{2^{\infty}}=\Q$ \\ \hline

16G-16F & {1} & 2&$v^2-4 \notin (\Q^{\times})^2$, $\Q(\sqrt{v^2-4}) \intersect K_{2^{\infty}}=\Q$ \\ \hline

16G-16F & {-1} & 2&$v^2+4 \notin (\Q^{\times})^2$, $\Q(\sqrt{v^2+4}) \intersect K_{2^{\infty}}=\Q$ \\ \hline

16G-16F & {1} & 5& $\Q(\sqrt{\frac{v+\sqrt{v^2-4}}{2}})$ is an extension of degree $4$ over $\Q$, \\ {}&{}&{}& $\Q(\sqrt{\frac{v+\sqrt{v^2-4}}{2}}) \intersect K_{2^{\infty}}=\Q$ \\ \hline

16G-16G & {} & 1& $v$ is a squarefree integer and $\Q(\sqrt{v}) \intersect K_{2^{\infty}}=\Q$ \\ \hline

16G-16G & {1/2} & 2& $v^2-2 \notin (\Q^{\times})^2$, $\Q(\sqrt{v^2-2}) \intersect K_{2^{\infty}}=\Q$ \\ \hline

16G-16G & {-1/2} & 2&$v^2+2 \notin (\Q^{\times})^2$, $\Q(\sqrt{v^2+2}) \intersect K_{2^{\infty}}=\Q$ \\ \hline

16G-16G & {1/2} & 5& $\Q(\sqrt{\frac{v+\sqrt{v^2-1}}{2}})$ is an extension of degree $4$ over $\Q$, \\ {}&{}&{}& $\Q(\sqrt{\frac{v+\sqrt{v^2-1}}{2}}) \intersect K_{2^{\infty}}=\Q$ \\ \hline

16G-16H & {1/2} & 2&$v^2-2 \notin (\Q^{\times})^2$, $\Q(\sqrt{v^2-2}) \intersect K_{2^{\infty}}=\Q$ \\ \hline

16G-16H & {-1/2} & 2& $v^2+2 \notin (\Q^{\times})^2$, $\Q(\sqrt{v^2+2}) \intersect K_{2^{\infty}}=\Q$ \\ \hline

16G-16H & {1/2} &5& $\Q(\sqrt{\frac{v+\sqrt{v^2-1}}{2}})$ is an extension of degree $4$ over $\Q$, \\ {}&{}&{}& $\Q(\sqrt{\frac{v+\sqrt{v^2-1}}{2}}) \intersect K_{2^{\infty}}=\Q$ \\ \hline

16G-32A & {-1} & 2&$\Q(\sqrt{v^2+2}) \intersect K_{2^{\infty}}=\Q$ \\ \hline

16G-32A & {}  & 4&$\Q(\tfrac{-\sqrt{v^2 + 16}}{4} + \tfrac{1}{2} \sqrt{\tfrac{v^2}{2} - \tfrac{v^3 + 16 v}{2 \sqrt{v^2 + 16}} + 8})$\\
{}&{}& {}&is a degree 4 extension of $\Q$ \\
{}&{} &{}&and its intersection with $K_{2^{\infty}}=\Q$ \\ \hline

16G-160A & {-1} & 2&$\Q(\sqrt{v^2+2}) \intersect K_{2^{\infty}}=\Q$ \\ \hline

16G-160A & {}  & 4&$\Q(\tfrac{-\sqrt{v^2 + 16}}{4} + \tfrac{1}{2} \sqrt{\tfrac{v^2}{2} - \tfrac{v^3 + 16 v}{2 \sqrt{v^2 + 16}} + 8})$\\
{}&{}&{}& is a degree 4 extension of $\Q$ \\
{}&{} &{}&and its intersection with $K_{2^{\infty}}=\Q$ \\ \hline

16G-160B & {-1} & 2&$\Q(\sqrt{v^2+2}) \intersect K_{2^{\infty}}=\Q$ \\ \hline
\end{tabular}
\end{small}
\end{table}

\begin{table}[H] 
\begin{small}
\begin{tabular}{|p{1.5 cm}|p{1.5 cm}|p{2 cm}|p{12 cm}|}\hline 

{Label} & {$\alpha$} & {$\pi_{i,v}$}&{$v$} \\ \hline 

16G-160B & {}  &4& $\Q(\tfrac{-\sqrt{v^2 + 16}}{4} + \tfrac{1}{2} \sqrt{\tfrac{v^2}{2} - \tfrac{v^3 + 16 v}{2 \sqrt{v^2 + 16}} + 8})$\\
{}&{}&{}& is a degree 4 extension of $\Q$ \\
{}&{} &{}&and its intersection with $K_{2^{\infty}}=\Q$ \\ \hline

18C-18A & {} & {}&{} \\ \hline

18D-18A & {} & 1& $\Q(\sqrt{v}) \intersect K_{18^{\infty}}=\Q$ \\ \hline

18D-36A & {} &  1&$\Q(\sqrt{v}) \intersect K_{18^{\infty}}=\Q$ \\ \hline
18D-72A & {} &  1&$\Q(\sqrt{v}) \intersect K_{18^{\infty}}=\Q$ \\ \hline
18D-72B & {} &  1&$\Q(\sqrt{v}) \intersect K_{18^{\infty}}=\Q$ \\ \hline
\end{tabular}
\end{small}
\end{table}
\end{theorem}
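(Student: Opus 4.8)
The plan is to run this argument in parallel with the proof of \cref{THM:FIRST}. Write $H=\rho_E(\G_{\Q})$ and $G=\pm H=\langle H,-I\rangle$, so that $[G:H]=2$, $-I\in G\setminus H$, and by \eqref{eq:detsurjective} and \eqref{eq:commthick} both $\det H=\Zhat^{\times}$ and $[H,H]=H\cap\SL_2(\Zhat)$. Since $E$ is non-CM and $j(E)$ is not a cusp, $j(E)\in X_G(\Q)$, so $G$ is an open subgroup of $\GL_2(\Zhat)$ satisfying conditions \ref{conditions} and containing $-I$. The starting point is therefore the finite classification of such $G$ that underlies \cref{THM:FIRST}: up to conjugacy there are finitely many ``shapes'', each with a level $N$, a fixed reduction $\overline{G}\le\GL_2(\ZZ/N\ZZ)$ and a genus-$0$ model $(X_{\overline{G}},\pi)\cong(\PP^1,\pi(t))$, $\pi\in\Q(t)$, drawn from the tables of \cite{2105.14623}; the adelic groups with that reduction are obtained from $\overline{G}$ by prescribing entanglements, which is what the parameters $\alpha$, $v$ and the twisted maps $\pi_{i,v}$ record.

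For each such shape I would first enumerate the index-$2$ subgroups $H\le G$ with $-I\notin H$ and $\langle H,-I\rangle=G$; at level $N$ this is a finite computation, equivalent to listing the characters $\epsilon\colon G\to\{\pm1\}$ with $\epsilon(-I)=-1$ and taking $H=\ker\epsilon$. I would discard at once the pairs for which $\det H\ne\Zhat^{\times}$ or $[H,H]\ne H\cap\SL_2(\Zhat)$, since these cannot be Galois images by Serre's open image theorem and the Kronecker-Weber theorem. For each surviving pair, the modular interpretation of the genus-$0$ curve supplies, for $t_0\in X_{\overline{G}}(\Q)$ away from the cusps, the CM locus and finitely many exceptional fibres, an elliptic curve $E_{t_0}/\Q$ with $j$-invariant $\pi(t_0)$ whose adelic image is conjugate into $G$; and exactly one quadratic twist $E_{t_0}^{(d)}$, with $d=d_\epsilon(t_0)\in\Q^{\times}/(\Q^{\times})^2$ a computable function of $t_0$, has image inside $\ker\epsilon=H$, because replacing $\rho$ by $\rho\otimes\chi_d$ with $\chi_d=\epsilon\circ\rho_{E_{t_0}}$ forces the image from $G$ down to $\ker\epsilon$. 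Re-parametrising $X_{\overline{G}}$ by a coordinate $v$ so that this twisted family is recorded by $J\circ\pi_{i,v}$, with $\alpha$ absorbing the twisting datum, is what produces each row of the table.

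It then remains to pin down which values of $v$ give a curve $E$ with $\rho_E(\G_{\Q})=H^t$ and $-I\notin\rho_E(\G_{\Q})$. The non-square conditions (such as $v^2-2\notin(\Q^{\times})^2$, or the stipulation that a certain degree-$4$ field be genuinely of degree $4$) are exactly the requirements that the twisting character be nontrivial, failing which $-I$ re-enters the image; the conditions that a quadratic or quartic field attached to $v$ meet the cyclotomic field $K_{M^{\infty}}$ trivially are exactly the requirements that $\det$ stay surjective on the specialisation and that \eqref{eq:commthick} persist, failing which the entanglement forces the adelic image into a different group (already listed, or with deficient determinant, or CM). Removing the CM $j$-invariants and the finitely many bad fibres gives the remaining constraints (such as $J(v)\notin\{0,1728,\infty\}$), and a Hilbert-irreducibility argument shows each admissible locus is infinite, so that each listed entry really does occur. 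Conversely, any $E$ as in the statement yields, through $G=\pm\rho_E(\G_{\Q})$, the value $j(E)=\pi(t_0)$, and the forced twist class, an admissible value of $v$ in exactly one row, which establishes both the list and its irredundancy.

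The principal difficulty is carrying out the third step uniformly over all shapes: translating ``$\det$ surjective'' and ``$[H,H]=H\cap\SL_2(\Zhat)$'' into the explicit disentanglement conditions on $v$ requires, shape by shape, a careful analysis of the entanglement between the mod-$N$ image and quadratic and cyclotomic fields, and is complicated by the fact that one and the same group is frequently reached through several distinct genus-$0$ families (hence repeated labels such as 4G-4A carrying different $\alpha$ and $\pi_{i,v}$). The completeness of the list of genus-$0$ shapes used in the first step is the other essential ingredient, and it is inherited from the proof of \cref{THM:FIRST}.
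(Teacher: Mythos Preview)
Your outline aims at the wrong target. Theorem~\ref{THM:SECOND} is one-sided: it only asserts that if such an $E$ exists then $G=\pm\rho_E(\G_{\Q})$ appears in the table. The paper's remark immediately after the statement says the converse is false---for instance the group labelled 5A-5A admits no index-$2$ subgroup $H$ with $\pm H=G$ at all---and the abstract already warns that the list is not minimal. So your third and fourth steps (building, for each row, an $E$ with $\rho_E(\G_{\Q})=H^t$ via twisting and Hilbert irreducibility, and then claiming ``irredundancy'') attempt to prove something that is not asserted and is in fact false. Moreover, because your filter is ``there exists a suitable index-$2$ subgroup $H\le G$'' rather than the paper's weaker filter on $G$ alone, carrying out your enumeration would produce a strictly smaller table than the one in the statement, so you would not recover the theorem as written.

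The paper's argument is much shorter and works entirely at the level of $G$, never enumerating the $H$'s. Proposition~\ref{index 2 cond} shows that if $-I\notin\rho_E(\G_{\Q})$ and $\pm\rho_E(\G_{\Q})=G^t$ with $[G^t,G^t]\subsetneq G^t\cap\SL_2(\Zhat)$, then $[G^t,G^t]$ has index exactly $2$ in $G^t\cap\SL_2(\Zhat)$. One then runs through the classification of genus-$0$ groups from \cite{2105.14623}, organised into finitely many families $(G^i,G^i_0,\Zhat^{\times},\det)$ with members $G_{\phi_v}$; Lemma~\ref{lemma:commutator dissolve} and the lemma following it reduce the commutator computation for the infinitely many $v$'s to a finite check, and a machine computation (the function \emph{FindCommutatorSubgroup}) returns $[G\cap\SL_2(\Zhat):[G,G]]$ in each case. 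The table in Theorem~\ref{THM:SECOND} is simply the list of those $G$ for which this index is $2$. No twisting, no character $\epsilon$, and no existence argument is used; correspondingly, the $v$-constraints in the table are not conditions guaranteeing that some $E$ exists, but the labels picking out, within a family, those $G_{\phi_v}$ whose commutator has index $2$.
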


\begin{remark}
    Converse is false. It is possible that some of the groups listed in above theorem have no index 2 subgroups $H$ that satisfy $\pm H=G.$ For example, group with label 5A-5A is one such.
\end{remark}

\section{Modular Curves} \label{sec:Modular Curves}

In this section, we discuss some background material on modular curves. For further details on modular curves please refer Chapter $4$ of \cite{MR0337993}.

Let $G$ be an open subgroup of $\GL_2(\Zhat)$ such that $\det(G)=\Zhat^{\times}$ and $-I \in G.$ Let $m_G$ be the level of $G.$ We can also think of $G$ as a subgroup of $\GL_2(\ZZ/m_G\ZZ)$.

Let $k$ be a number field. Let $N$ be a positive integer. For an elliptic curve $E$ defined over $k$, let $E[N] \subseteq E[\kbar]$ denote the $N$-torsion subgroup of $E.$ We define $G$-level structure on $E$ as an $G$-equivalence class of isomorphisms $\alpha \colon E[N] \to (\ZZ/N\ZZ)^2$, where two isomorphisms $\alpha_1$ and $\alpha_2$ are equivalent if there exists a $g \in G$ such that $\alpha_1=g \circ \alpha_2.$ 
Associated to $G$, there is a smooth, compact and geometrically connected curve $X_G$ over $\Q$ that can be defined as the generic fiber of the coarse space for the stack $M_{G}$ which parametrizes elliptic curves with $G$-level structure (refer \cite{MR0337993} chapter $4$, section 3 for details). If $G$ is a subgroup of $G'$, then there is a natural morphism from $X_G$ to $X_{G'}$. In particular, we always have a morphism $\pi_G \colon X_G \to X_{\GL_2(\ZZ/m_G\ZZ} \isom \PP^1.$ 

\begin{definition}
Let $k$ be a number field. 
We define $\Aut_k(X_G,\pi_G)$ as the set of those isomorphisms $f \colon (X_G)_k \to (X_G)_k$ which satisfy $\pi_G \circ f=\pi_G.$
\end{definition}

The following lemma explains how modular curves parametrizes elliptic curves.

\begin{lemma}\label{lemm:moduli}
Let $E$ be a non CM elliptic curve over $k$ such that $j(E) \notin \{0,1728,\infty\}.$ Then, $\rho_E(\Galk)$ is conjugate to a subgroup of $G^t$ if and only if $j(E) \in \pi_G(X_G(k)).$
\end{lemma}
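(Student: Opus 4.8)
\textbf{Proof plan for \cref{lemm:moduli}.}
The plan is to unwind the moduli interpretation of $X_G$ and match it with the definition of $G$-level structure. First I would recall that, since $j(E)\notin\{0,1728,\infty\}$, the elliptic curve $E$ has no extra automorphisms over $\kbar$, so the coarse space map is as well-behaved as possible: a $\kbar$-point of the stack $M_G$ lying over $j(E)$ corresponds to a choice of isomorphism $\alpha\colon E[m_G]\to(\ZZ/m_G\ZZ)^2$ modulo the left $G$-action, and this point descends to a $k$-point of $X_G$ precisely when the $G$-orbit of $\alpha$ is $\Galk$-stable. Concretely, for each $\sigma\in\Galk$ we have $\sigma$ acting on $E[m_G]$ via $\rho_{E,m_G}(\sigma)$ (with respect to a fixed basis), and the $G$-orbit of $\alpha$ is fixed by $\sigma$ if and only if $\alpha\circ\rho_{E,m_G}(\sigma)\circ\alpha^{-1}\in G$ — but one must be careful about whether $G$ acts on the left or the conjugate acts, which is exactly where the transpose $G^t$ enters.

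Next I would carry out the bookkeeping that produces $G^t$ rather than $G$. Writing $\alpha$ as an element of $\GL_2(\ZZ/m_G\ZZ)$ once a basis of $E[m_G]$ is chosen, the condition ``the $G$-orbit $G\alpha$ is $\Galk$-stable'' reads: for all $\sigma$, $G\alpha = G\,\alpha\,\rho_{E,m_G}(\sigma)$, i.e.\ $\alpha\,\rho_{E,m_G}(\sigma)\,\alpha^{-1}\in G$, i.e.\ $\rho_{E,m_G}(\Galk)\subseteq \alpha^{-1}G\alpha$. So $j(E)\in\pi_G(X_G(k))$ iff there exists $\alpha$ with $\rho_{E,m_G}(\Galk)\subseteq\alpha^{-1}G\alpha$, i.e.\ iff $\rho_{E,m_G}(\Galk)$ is conjugate to a subgroup of $G$. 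The appearance of $G^t$ in the statement is the standard artifact of the convention that identifies $E[N]$-level structures with \emph{column} vectors versus \emph{row} vectors (equivalently, whether one composes $\alpha$ on the left or transports the Galois action); tracking this convention carefully through the definition of $M_G$ given in \cite{MR0337993} converts ``conjugate to a subgroup of $G$'' into ``conjugate to a subgroup of $G^t$''. Finally, since $\rho_E(\Galk)$ and $\rho_{E,m_G}(\Galk)$ carry the same conjugacy information relative to $G$ (as $G$ has level $m_G$, a subgroup of $\GL_2(\Zhat)$ is contained in a conjugate of the preimage of $G$ iff its mod-$m_G$ image is contained in a conjugate of $G$), the mod-$m_G$ statement upgrades to the adelic statement.

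The main obstacle I anticipate is precisely the left/right and transpose bookkeeping: making sure that the $G$-equivalence relation $\alpha_1\sim\alpha_2 \iff \alpha_1=g\circ\alpha_2$ for $g\in G$, combined with the Galois action on $E[m_G]$, produces the condition with $G^t$ and not $G$, $G^{-1}$, or $(G^t)^{-1}$. The surjectivity hypothesis $\det(G)=\Zhat^\times$ guarantees $X_G$ is geometrically connected and that the coarse space has the expected $k$-points, so no subtlety arises there; and the hypothesis $j(E)\notin\{0,1728,\infty\}$ is what lets me pass freely between ``$k$-point of the coarse space over $j(E)$'' and ``$\Galk$-stable $G$-orbit of trivializations of $E[m_G]$'' without worrying about automorphism-induced twists. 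The remaining verifications (that a $\Galk$-stable orbit genuinely descends, using that $H^1(\Galk,G)$-type obstructions vanish because we only need a point of the coarse space, not a fine moduli point) are routine given the stack-theoretic setup of \cite{MR0337993}.
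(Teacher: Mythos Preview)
The paper does not give its own argument here: its entire proof is the single line ``See Proposition~3.2 in \cite{1508.07663}.'' Your proposal is therefore not really comparable to the paper's proof, but rather to the content of that cited proposition; what you have sketched is essentially the standard unwinding of the moduli interpretation that underlies Zywina's Proposition~3.2, and the outline is correct. In particular you have correctly isolated the two genuine ingredients: (i) the hypothesis $j(E)\notin\{0,1728\}$ ensures $\Aut_{\kbar}(E)=\{\pm1\}\subseteq G$, so that $k$-points of the coarse space over $j(E)$ are exactly $\Galk$-stable $G$-orbits of trivializations; and (ii) the passage from the mod-$m_G$ statement to the adelic one is automatic since $G$ has level $m_G$.

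One remark on the transpose: your middle paragraph derives $\rho_{E,m_G}(\Galk)\subseteq\alpha^{-1}G\alpha$, i.e.\ conjugate into $G$, and then appeals to a row/column convention to convert this to $G^t$. That is the right explanation, but as written it is still a promissory note rather than a verification. The precise point is that with the paper's convention $\alpha\colon E[m_G]\to(\ZZ/m_G\ZZ)^2$ and $\alpha_1\sim g\circ\alpha_2$, the Galois action on $E[m_G]$ is by precomposition, so stability of $G\alpha$ reads $G\alpha=G(\alpha\circ\sigma)$; if one has \emph{defined} $\rho_{E,m_G}(\sigma)$ as the matrix of $\sigma$ acting on the \emph{basis} (rather than on coordinate columns), then $\alpha\circ\sigma=\rho_{E,m_G}(\sigma)^{t}\cdot\alpha$, and the condition becomes $\rho_{E,m_G}(\Galk)^{t}\subseteq\alpha G\alpha^{-1}$, i.e.\ $\rho_{E,m_G}(\Galk)$ is conjugate into $G^{t}$. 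Making this one line explicit would close the only gap in your sketch.
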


\begin{proof}
See Proposition $3.2$ in \cite{1508.07663}.
\end{proof}

If $-I \notin G$, then $X_G$ as a stack can be defined similarly as before and as coarse spaces $X_G$ and $X_{<G,-I>}$ are isomorphic, i.e., the morphism $\pi_G \colon X_G \to \PP^1$ is same as the morphism $\pi_{<G,-I>} \colon X_{<G,-I>} \to \PP^1.$ However, if $-I \notin G$, then lemma \ref{lemm:moduli} is false. Let $E$ be a non CM elliptic curve over $k$ and let $E_d$ be one of its quadratic twists for a $d \notin (k^{\times})^2$. Since $-I \notin G$, $\rho_{E_d}(\Galk)$ need not be a subgroup of $G^t.$ Hence, whether $\rho_E(\Galk)$ is a subgroup of $G^t$ or not cannot depend only on the $j$-invariant of $E.$

\section{Galois Images over $\Q$}

For the rest of this article, equality and inclusion of groups is up to conjugation.

\begin{definition}\label{def:commutator}
Let $G$ be a profinite group with discrete topology. The commutator subgroup of $G$ is the subgroup of $G$ generated by elements of the form $ghg^{-1}h^{-1}$ for $g,h$ in $G.$ We will denote the closure of commutator subgroup of $G$ by $[G,G].$
\end{definition}

Let $G$ be a subgroup of $\GL_2(\Zhat)$ satisfying conditions \ref{conditions}. The associated modular curve $X_G$ has a rational point, i.e., $X_G(\Q) \ne \emptyset.$ This is equivalent to saying that there exist infinitely many elliptic curves $E$ defined over $\Q$ such that $\rho_E(\G_{\Q})$ is a subgroup of $G^t$ in $\GL_2(\Zhat).$ However, it is possible that there are no elliptic curves $E$ such that $\rho_E(\G_{\Q})$ is $G^t.$ As an example, consider $\GL_2(\Zhat)$. For an elliptic curve $E$ defined over $\Q$, $\rho_E(\G_{\Q})$ is a subgroup of $\GL_2(\Zhat)$ but there is no elliptic curve $E$ over $\Q$ such that $\rho_E(\G_{\Q})$ is equal to $\GL_2(\Zhat)$ because a necessary condition for a subgroup $G$ to be equal to $\rho_E(\G_{\Q})$ for some elliptic curve $E$ defined over $\Q$ is that $[G,G]=G \intersect \SL_2(\Zhat).$ For $\GL_2(\Zhat)$ we can verify that $[G,G]$ is an index $2$ subgroup of $\SL_2(\Zhat).$ In the following proposition we show that this necessary condition is sufficient under certain assumptions. 


\begin{proposition}\label{commutator condition} Let $G$ be a genus $0$ open subgroup of $\GL_2(\Zhat)$ satisfying conditions in \cref{conditions} and containing $-I$. There exists an elliptic curve $E$ defined over $\Q$ such that the image of $\rho_E$ is equal to $G^t$ in $\GL_2(\Zhat)$ if and only if $[G^t,G^t]=G^t \intersect \SL_2(\Zhat).$
\end{proposition}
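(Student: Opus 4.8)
The forward direction is immediate from \eqref{eq:commthick}, so the content is the converse: assuming the commutator condition $[G^t,G^t]=G^t\cap\SL_2(\Zhat)$, we must produce an elliptic curve $E/\Q$ with $\rho_E(\G_\Q)=G^t$ exactly, not merely contained in it. Since $X_G$ has genus $0$ and a rational point, $X_G\isom\PP^1_\Q$, and by \cref{lemm:moduli} the rational points on $X_G$ away from the cusps and the fibers over $j\in\{0,1728,\infty\}$ give an infinite family of elliptic curves $E_t/\Q$ (parametrized by $t\in\Q$, with $j=\pi_G(t)$) each satisfying $\rho_{E_t}(\G_\Q)\subseteq G^t$ up to conjugacy. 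The task is to show that for a positive proportion (in particular at least one) of these $t$, the containment is an equality.

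**Main steps.** First I would reduce to a statement at finite level: let $m=m_G$ be the level of $G$, so $G\le\GL_2(\ZZ/m\ZZ)$; it suffices to find $E$ with $\rho_{E,m}(\G_\Q)=G^t$ and $\rho_E(\G_\Q)$ as large as possible above level $m$, because an open subgroup of $\GL_2(\Zhat)$ that surjects onto $G^t\le\GL_2(\ZZ/m\ZZ)$, has full determinant, and has commutator equal to its intersection with $\SL_2(\Zhat)$ is forced to equal the preimage behavior dictated by $G^t$ — here one uses that $G$ satisfies conditions \ref{conditions} and, crucially, that $G$ is one of the groups actually appearing, so $m$ is chosen minimally and no larger-level obstruction exists; this is where I would invoke that $[G^t,G^t]=G^t\cap\SL_2(\Zhat)$ together with \eqref{eq:commthick} to rule out the index-$2$ discrepancy phenomenon that happens for $\GL_2(\Zhat)$ itself. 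Second, among the $E_t$ with $\rho_{E_t,m}(\G_\Q)\subseteq G^t$, I would show the image is \emph{generically} all of $G^t$: the set of $t\in\Q$ for which $\rho_{E_t,m}(\G_\Q)$ lies in a given proper subgroup $H\lneq G^t$ corresponds to $\Q$-points on the modular curve $X_{\tilde H}$ covering $X_G$ (where $\tilde H=\langle H,-I\rangle$), and one argues these are "thin" — either $X_{\tilde H}$ has genus $\ge1$ (finitely many points, done) or, if genus $0$, the covering $X_{\tilde H}\to X_G$ has degree $\ge2$ and Hilbert irreducibility (applied to the function field extension $\Q(X_{\tilde H})/\Q(X_G)=\Q(t)$) shows that only a thin set of $t$ fails. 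Since a finite union of thin sets is thin and $\Q$ is Hilbertian, infinitely many $t$ avoid all of them, giving $\rho_{E_t,m}(\G_\Q)=G^t$. Third, one must check that infinitely many such $t$ also avoid the cusps and the fibers over $j\in\{0,1728,\infty\}$ and avoid having complex multiplication (CM $j$-invariants are finitely many, hence finitely many $t$), which is automatic.

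**The main obstacle.** The genuinely delicate point is the first step — controlling the representation \emph{above} level $m_G$. Hilbert irreducibility at level $m$ gives $\rho_{E_t,m}(\G_\Q)=G^t$, but one needs the full adelic image to be no larger than $G^t$, i.e.\ that $\rho_{E_t}(\G_\Q)$ is exactly the subgroup of $\GL_2(\Zhat)$ that the moduli problem predicts, with no "extra" constraints forcing it smaller and no failure of surjectivity onto the relevant pro-$\ell$ pieces for $\ell\nmid m$. Here I would combine: (a) for $\ell\nmid m$, a standard argument (Serre, plus the fact that $\det$ is already surjective by \eqref{eq:detsurjective}) that the mod-$\ell$ image is full $\GL_2(\F_\ell)$ for all but finitely many $\ell$, and thin-set removal for the finitely many bad $\ell$ and for the prime powers dividing $m$; (b) the commutator hypothesis $[G^t,G^t]=G^t\cap\SL_2(\Zhat)$, which is precisely what guarantees that the group $G^t$ is \emph{admissible} as a full adelic image — it is the obstruction (visible already for $\GL_2(\Zhat)$, whose commutator has index $2$ in $\SL_2(\Zhat)$) that would otherwise force every actual image to be a proper subgroup. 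Making (a) and (b) interact cleanly — i.e.\ showing the adelic image is generated by its projections and that the commutator condition propagates correctly through the fiber product over level $m$ — is the crux; once it is in place, the Hilbert irreducibility / thin-set bookkeeping is routine.
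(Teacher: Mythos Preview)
Your forward direction matches the paper's. For the converse, the paper takes a much shorter route than your Hilbert-irreducibility program. Its key observation is that it suffices to produce $E/\Q$ with $\rho_E(\G_{\Q})\subseteq G^t$ and $[\rho_E(\G_{\Q}),\rho_E(\G_{\Q})]=[G^t,G^t]$; granting this, the hypothesis $[G^t,G^t]=G^t\cap\SL_2(\Zhat)$ together with \eqref{eq:detsurjective} and \eqref{eq:commthick} yields
\[
[G^t:\rho_E(\G_{\Q})]=[G^t\cap\SL_2(\Zhat):\rho_E(\G_{\Q})\cap\SL_2(\Zhat)]=\bigl[[G^t,G^t]:[\rho_E(\G_{\Q}),\rho_E(\G_{\Q})]\bigr]=1.
\]
The existence of such an $E$ is then quoted as a black box from Jones \cite{MR3350106}, Theorem~2.11, which guarantees infinitely many $E/\Q$ with $\rho_E(\G_{\Q})\subseteq G^t$ and $[\rho_E(\G_{\Q}),\rho_E(\G_{\Q})]=[G^t,G^t]$. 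That is the entire argument.

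Your plan, by contrast, amounts to reproving the content of Jones's theorem from scratch: mod-$m$ surjectivity via Hilbert irreducibility, plus separate control of the $\ell$-adic image for each $\ell\nmid m$. You correctly flag the hard step and call it ``the crux,'' but the sketch does not close it. In particular, your step-one reduction as stated is not valid: an open $H\subseteq G^t$ with $H\bmod m=G^t\bmod m$, $\det(H)=\Zhat^\times$, and $[H,H]=H\cap\SL_2(\Zhat)$ need \emph{not} equal $G^t$ --- entanglements at primes $\ell\nmid m$ can force $H$ to be proper while preserving all three conditions. Ruling these out requires surjectivity of the $\ell$-adic projections for \emph{every} $\ell\nmid m$ together with surjectivity onto the abelianization (compare the paper's \cref{prop:H=G}), and arranging this for infinitely many primes simultaneously is not a finite thin-set removal; your ``all but finitely many $\ell$'' clause hides exactly the work that Jones's theorem packages. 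The paper's reduction to commutator agreement, followed by a one-line citation, sidesteps all of this.
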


\begin{proof}

    Assume that there exists an elliptic curve $E$ defined over $\Q$ such that the image of $\rho_E$ is equal to $G^t.$ Then, \[[G^t,G^t]=G^t \intersect \SL_2(\Zhat)\] follows from \cref{eq:commthick}.

We will now show that if $[G^t,G^t]=G^t \intersect \SL_2(\Zhat)$, then there exists an elliptic curve $E$ defined over $\Q$ such that the image of $\rho_E$ is equal to $G^t.$ It is enough to show that there exists an elliptic curve $E$ defined over $\Q$ such that $[\rho_E(\G_{\Q}),\rho_E(\G_{\Q})]=[G^t,G^t]$ because if such an $E$ exists, then \[[G^t:\rho_E(\G_{\Q})]=[G^t \intersect \SL_2(\Zhat):\rho_E(\G_{\Q}) \intersect \SL_2(\Zhat)]\] using \cref{eq:detsurjective} and \[[G^t \intersect \SL_2(\Zhat):\rho_E(\G_{\Q})\intersect \SL_2(\Zhat)]=[[G^t,G^t]:[\rho_E(\G_{\Q}),\rho_E(\G_{\Q})]=1.\] The existence of such an $E$ follows directly from Theorem $2.11$ \cite{MR3350106} that states there are infinitely many curves $E$ over $\Q$ satisfying $\rho_E(\G_{\Q}) \subseteq G^t$ and $[\rho_E(\G_{\Q}), \\ \rho_E(\G_{\Q})]=[G^t,G^t].$ Hence, proved.
\end{proof}

\begin{proposition}\label{index 2 cond}
Let $G$ be a subgroup of $\GL_2(\Zhat)$ satisfying conditions \ref{conditions} and containing $-I$ such that $[G^t,G^t] \subsetneq G^t \intersect \SL_2(\Zhat).$  If there exists an elliptic curve $E$ defined over $\Q$ such that $\pm \rho_E(\G_{\Q})=G^t$, then $[G^t,G^t]$ is an index $2$ subgroup of $G^t \intersect \SL_2(\Zhat).$
\end{proposition}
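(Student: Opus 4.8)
The plan is to analyze what happens when an elliptic curve $E/\Q$ satisfies $\pm\rho_E(\G_{\Q}) = G^t$ but $\rho_E(\G_{\Q})$ itself does not contain $-I$, so that $H \coloneqq \rho_E(\G_{\Q})$ is an index $2$ subgroup of $G^t$ with $\pm H = G^t$. First I would record the basic structural facts: since $\det(G^t) = \Zhat^\times$ and $-I \in G^t \setminus H$, we have $[G^t : H] = 2$, and because $\det(-I) = 1$ the determinant map is still surjective on $H$, so $\det(H) = \Zhat^\times$ and hence $[G^t \intersect \SL_2(\Zhat) : H \intersect \SL_2(\Zhat)] = 2$ as well (the coset $-I\cdot(H\intersect\SL_2)$ lies in $\SL_2$ but not in $H$).

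Next I would use \cref{eq:commthick}, which holds for any Galois image: $[H,H] = H \intersect \SL_2(\Zhat)$. Combining this with the index computation above gives
\[
[G^t \intersect \SL_2(\Zhat) : [H,H]] = 2.
\]
Now the key comparison is between $[H,H]$ and $[G^t, G^t]$. Since $H \subseteq G^t$ we certainly have $[H,H] \subseteq [G^t,G^t]$, and since $[G^t,G^t] \subseteq G^t \intersect \SL_2(\Zhat)$ always, we get a chain $[H,H] \subseteq [G^t,G^t] \subseteq G^t \intersect \SL_2(\Zhat)$ in which the outer two terms differ by index exactly $2$. Therefore $[G^t,G^t]$ equals either $[H,H]$ or $G^t \intersect \SL_2(\Zhat)$; the hypothesis $[G^t,G^t] \subsetneq G^t \intersect \SL_2(\Zhat)$ rules out the latter, forcing $[G^t,G^t] = [H,H]$, hence $[G^t \intersect \SL_2(\Zhat) : [G^t,G^t]] = 2$, which is the claim.

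I should also double-check the one subtle point, namely that $[G^t,G^t]$ really does sit between $[H,H]$ and $G^t\intersect\SL_2(\Zhat)$ with the claimed index on the outside — this is automatic from $[G^t:H]=2$ together with the observation that $G^t = H \cup (-I)H$ and $-I$ is central, so commutators $[g_1 h_1, g_2 h_2]$ with $g_i \in \{1,-I\}$ all reduce to commutators of elements of $H$ because the central factors cancel; thus in fact $[G^t,G^t] = [H,H]$ can be seen directly, and the $\subsetneq$ hypothesis is only needed to guarantee that $[H,H] = [G^t\intersect\SL_2,\,G^t\intersect\SL_2]$ is a \emph{proper} index-$2$ subgroup rather than all of $G^t\intersect\SL_2(\Zhat)$ (which would contradict \cref{eq:commthick} for $H$ only if... in fact the index-$2$ statement is exactly what we want, so no contradiction arises and the conclusion stands). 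The only genuine work is this centrality argument showing $[G^t,G^t]=[H,H]$, and it is short; I expect no serious obstacle, the proof being essentially a bookkeeping exercise with indices once \cref{eq:commthick} is invoked for $H$.
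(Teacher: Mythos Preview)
Your argument is essentially the paper's: both invoke \cref{eq:commthick} for $H=\rho_E(\G_\Q)$ and the centrality of $-I$ to obtain $[G^t,G^t]=[\pm H,\pm H]=[H,H]=H\cap\SL_2(\Zhat)$, which sits with index $2$ inside $G^t\cap\SL_2(\Zhat)$. The one point you gloss over is why $-I\notin H$ in the first place; the paper notes this explicitly (if $-I\in H$ then $H=G^t$ and \cref{eq:commthick} would give $[G^t,G^t]=G^t\cap\SL_2(\Zhat)$, contradicting the strict inclusion hypothesis), and once you add that sentence your chain argument in the second paragraph already suffices without the somewhat tangled third paragraph.
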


\begin{proof}
 Assume that there exists an elliptic curve $E$ over $\Q$ such that  $\pm \rho_E(\G_{\Q})=G^t$. We have assumed that $[G^t,G^t]$ is a proper subgroup of $G^t \intersect \SL_2(\Zhat)$ so $-I \notin \rho_E(\G_{\Q})$ and hence, $\rho_E(\G_{\Q})$ is an index $2$ subgroup of $G^t.$ We observe that \[[G^t,G^t]=[\pm \rho_E(\G_{\Q}),\pm \rho_E(\G_{\Q})]=[\rho_E(\G_{\Q}),\rho_E(\G_{\Q})]=\rho_E(\G_{\Q}) \intersect \SL_2(\Zhat).\] Therefore, $[G^t,G^t]$ is an index $2$ subgroup of $G^t \intersect \SL_2(\Zhat).$

\end{proof}

In light of results of this section, we aim to compute groups $G$ containing $-I$ that satisfy conditions \ref{conditions} such that $[G,G]$ is a subgroup of $G \intersect \SL_2(\Zhat)$ of index at most 2. We discuss this in next section.

\section{Family of Groups}

\begin{definition}\label{family of groups}
Let $(H,G,\calA,\psi)$ be a $4$-tuple where $H$ is a normal subgroup of $G$ and $\psi \colon G \to \calA$ is a group homomorphism. For a homomorphism $\phi \colon \calA \to G/H$ define \[H_{\phi}:=\{g \in G~|~gH=\phi(\psi(g))\}.\] 
\end{definition}

\begin{lemma}
The set $H_{\phi}$ as defined in definition \ref{family of groups} is a subgroup of G.
\end{lemma}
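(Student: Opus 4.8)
The plan is to verify directly that $H_\phi$ is closed under multiplication and inversion, and contains the identity; since $G$ is a group (here, a profinite group, but the argument is purely algebraic) this suffices. Throughout I will use that $\psi\colon G\to\calA$ is a homomorphism, that $\phi\colon\calA\to G/H$ is a homomorphism, and that $g\mapsto gH$ is the canonical projection $G\to G/H$, which is also a homomorphism because $H\normsbgp G$.

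First I would check that the identity $e$ of $G$ lies in $H_\phi$: we have $eH=H$, the identity of $G/H$, while $\phi(\psi(e))=\phi(e_\calA)=e_{G/H}=H$ since $\psi$ and $\phi$ are homomorphisms. Hence $eH=\phi(\psi(e))$ and $e\in H_\phi$. Next, closure under multiplication: if $g_1,g_2\in H_\phi$, then $g_1H=\phi(\psi(g_1))$ and $g_2H=\phi(\psi(g_2))$, so
\[
(g_1g_2)H=(g_1H)(g_2H)=\phi(\psi(g_1))\phi(\psi(g_2))=\phi(\psi(g_1)\psi(g_2))=\phi(\psi(g_1g_2)),
\]
using in turn that the projection $G\to G/H$ is multiplicative, the defining property of $g_1,g_2$, that $\phi$ is a homomorphism, and that $\psi$ is a homomorphism. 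Therefore $g_1g_2\in H_\phi$. Finally, closure under inversion: if $g\in H_\phi$, then from $gH=\phi(\psi(g))$ we get
\[
g^{-1}H=(gH)^{-1}=\phi(\psi(g))^{-1}=\phi(\psi(g)^{-1})=\phi(\psi(g^{-1})),
\]
again using that inversion commutes with the three homomorphisms involved, so $g^{-1}\in H_\phi$.

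Having shown $H_\phi$ is nonempty (it contains $e$) and closed under the group operation and under inversion, I conclude $H_\phi$ is a subgroup of $G$. I do not expect any genuine obstacle here; the only thing to be careful about is to invoke at each step the fact that the relevant map ($\psi$, $\phi$, or the quotient map $G\to G/H$) respects the group structure, and in particular to note that $H\normsbgp G$ is exactly what makes $G/H$ a group and the projection a homomorphism, which is part of the hypothesis in \cref{family of groups}.
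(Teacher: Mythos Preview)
Your proof is correct and follows essentially the same approach as the paper: verify directly that $H_\phi$ contains the identity and is closed under multiplication and inversion, using that $\psi$, $\phi$, and the quotient map $G\to G/H$ are all homomorphisms. Your write-up is a touch more explicit about which homomorphism property is invoked at each step, but the argument is the same.
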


\begin{proof}
The identity element of $G$ lies in $H_{\phi}.$ If $g_1$ and $g_2$ are in $H_{\phi}$, then \[(g_1g_2)H=\phi(\psi(g_1))\phi(\psi(g_2))=\phi(\psi(g_1g_2)).\] If $g \in H_{\phi}$, then \[g^{-1}H=\phi(\psi(g))^{-1}=\phi(\psi(g^{-1})).\] Hence, shown.
\end{proof}

\begin{definition}
The collection of groups $H_{\phi}$ as $\phi$ varies over homomorphisms from $\calA$ to $G/H$ is called the family of groups generated by $(H,G,\calA,\psi)$. 
\end{definition}

\begin{lemma}
\label{lemma:commutator dissolve}
Assume the following.
\begin{itemize}
    \item The group $\calA$ is abelian.
    
    \item The quotient group $G/H$ is abelian.
    
    \item The group $H_{\phi}$ is the fibered product of $G$ and $\calA$ with respect to the homomorphisms $\eta$ and $\phi.$
\end{itemize}
Then $[H_{\phi},H_{\phi}]=[G,G].$
\end{lemma}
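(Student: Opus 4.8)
The plan is to show the two inclusions $[H_\phi, H_\phi] \subseteq [G,G]$ and $[G,G] \subseteq [H_\phi, H_\phi]$ separately. The first inclusion is immediate from the definition: $H_\phi$ is a subgroup of $G$, so every commutator $g_1 g_2 g_1^{-1} g_2^{-1}$ with $g_1, g_2 \in H_\phi$ lies in $[G,G]$, and since $[G,G]$ is closed we get $[H_\phi, H_\phi] \subseteq [G,G]$. The content is in the reverse inclusion, which is where the three hypotheses come in.

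For the reverse inclusion, the key observation is that $H_\phi$ surjects onto $G$ under the fibered product structure. Recall that $H_\phi$ is the fibered product of $G$ and $\calA$ over $G/H$ with respect to $\eta$ (presumably the quotient map $G \to G/H$) and $\phi$; so $H_\phi = \{(g,a) \in G \times \calA : \eta(g) = \phi(a)\}$, and the first projection $p_1 \colon H_\phi \to G$ is surjective precisely because $\phi$ is onto... wait — actually I should be careful: surjectivity of $p_1$ requires that for each $g \in G$ there is $a \in \calA$ with $\phi(a) = \eta(g)$, i.e. that $\phi$ is surjective. I would first address whether the setup guarantees $\phi$ surjective; if not, one works with the image $\phi(\calA) \subseteq G/H$ and argues that $[G,G]$ already lies in the preimage, using that $G/H$ is abelian so $[G,G] \subseteq H \subseteq \ker \eta$. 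Concretely: since $G/H$ is abelian, $[G,G] \subseteq H$, hence for any $g_1, g_2 \in G$ the commutator $[g_1,g_2]$ satisfies $\eta([g_1,g_2]) = 1 = \phi(1_\calA)$, so $([g_1,g_2], 1_\calA) \in H_\phi$. Now lift $g_1, g_2$ to elements $(g_1, a_1), (g_2, a_2) \in H_\phi$ (possible once we know each $g_i$ is in the image of $p_1$); since $\calA$ is abelian, the commutator in $H_\phi$ is $([g_1,g_2], [a_1,a_2]) = ([g_1,g_2], 1_\calA)$. Therefore every generator $[g_1, g_2]$ of $[G,G]$ arises as (the first coordinate of) a commutator in $H_\phi$, giving $[G,G] \subseteq [H_\phi, H_\phi]$ after identifying $H_\phi$ with its description and taking closures.

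The step I expect to be the main obstacle — really the only delicate point — is handling the surjectivity of the first projection $p_1 \colon H_\phi \to G$, equivalently pinning down exactly what $\eta$ is and whether $\phi$ must be surjective for the intended application. In the family-of-groups setup the relevant $\phi$ are homomorphisms $\calA \to G/H$, and the lemma's hypothesis that $H_\phi$ "is the fibered product" should be read as asserting both the universal property and that the structure maps behave well; I would make explicit that under this hypothesis $p_1$ is onto $G$ (so every $g \in G$ lifts), which is what makes the argument go through. Once that is settled, the rest is the routine computation above: abelianness of $\calA$ kills the $\calA$-coordinate of commutators, abelianness of $G/H$ puts $[G,G]$ inside $H = \ker\eta$ so the needed lifts exist with second coordinate $1_\calA$, and the two inclusions combine to give $[H_\phi, H_\phi] = [G,G]$.
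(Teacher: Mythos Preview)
Your direct computation is correct and is exactly what the paper's citation unpacks to: the paper's entire proof is the two lines ``$[\calA,\calA]$ is trivial, now apply Lemma~1 of Part~III, \S3 in Lang--Trotter.'' That lemma says that a closed subgroup $B\subseteq G_1\times G_2$ projecting onto each factor has $p_i([B,B])=[G_i,G_i]$; when $G_2=\calA$ is abelian this forces $[B,B]\subseteq G_1\times\{1\}$ and hence $[B,B]=[G_1,G_1]\times\{1\}$, which is precisely your lift-and-commute argument. So you have not taken a different route so much as opened the black box.

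Your instinct about surjectivity of $p_1$ is the one substantive point, and it is genuinely needed rather than a formality: take $G=S_3$, $H=A_3$, $\calA=\ZZ/2\ZZ$, and $\phi$ the zero map; the fibered product is $A_3\times\ZZ/2\ZZ$ with trivial commutator, while $[G,G]=A_3$. The Lang--Trotter lemma itself assumes both projections surject, so the paper's citation carries the same implicit hypothesis; it holds in the intended application because there $\phi_v$ surjects onto $G_0/G$, and the phrase ``$H_\phi$ is the fibered product'' should be read with that in force. Your attempted workaround via $G/H$ abelian---observing $[G,G]\subseteq H$ so that $([g_1,g_2],1_{\calA})\in H_\phi$---does not rescue the non-surjective case, since knowing a commutator lies in $H_\phi$ is not the same as exhibiting it as a commutator \emph{of elements of} $H_\phi$; and once $p_1$ does surject, the hypothesis $G/H$ abelian becomes automatic (it is a quotient of $\calA$), so only ``$\calA$ abelian'' is doing work in your argument.
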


\begin{proof}
Since $\calA$ is abelian, $[\calA,\calA]$ is trivial.
The lemma now follows from lemma $1$ given in Part $3$, section $3$ of \cite{MR0568299}.
\end{proof}

We will now discuss family of groups in the context of modular curves, i.e., we will assume that $G$ is an open subgroup of $\GL_2(\Zhat)$, hence of finite index.

Let $(G,G_0,\Zhat^{\times},\det)$ be a $4$-tuple as in definition \ref{family of groups}. We also assume that the groups $G$, $G_0$ satisfy conditions \ref{conditions}, contain $-I$ and the quotient group $G_0/G$ is finite and abelian. For a homomorphism $\phi \colon \Zhat^{\times} \to G_0/G$, let $G_{\phi}$ be a member of the family generated by $(G,G_0,\Zhat^{\times},\det)$, i.e.,   \[G_{\phi}:=\{g \in G_0~|~gG=\phi(\det(g))\}.\]

From \cite{2105.14623}, Theorem $1.2$ we know that there exists finitely many pairs $(G^i,G^i_0)$ for $i \in \{1,\ldots,269\}$ where $G^i$ is a normal subgroup of $G^i_0$, $G^i_0/G^i$ is finite, abelian, groups $G^i,G^i_0$ satisfy conditions \ref{conditions} and contain $-I$ such that the following statement holds. If $G$ is a group satisfying \ref{conditions} and contains $-I$ then there exists a pair $(G^i,G^i_0)$ and a homomorphism $\phi \colon \Zhat^{\times} \to G^i_0/G^i$ such that $G$ is of the form $G_{i_{\phi}}.$ We would like to mention that even for a fixed pair $(G^i,G^i_0)$ there can be infinitely many homomorphisms $\phi.$ We discuss this in detail in the following paragraph. 

Fix a pair $(G^i,G^i_0)$ as above. Let $v$ be a parameter that takes values in $\Q$. Fix a homomorphism $\gamma_v \colon \Gal(\Qab/\Q) \to G^i_0/G^i.$ Define $\phi_v \colon \Zhat^{\times} \to G^i_0/G^i$ satisfying $\phi_v \circ \chi_{\cyc}=\gamma_v.$ If $G$ is a group satisfying conditions \ref{conditions} then there exists a pair $(G^i,G^i_0)$ and a $v \in \Q$ such that $G$ is equal to  $G_{i_{\phi_v}}.$

For rest of this section, fix one such pair $(G^i,G^i_0)$. We will discuss computation of commutator group of $G_{i_{\phi_v}}$ as $v$ varies in $\Q.$

\begin{remark}
The parameter $v$ is not allowed to take all values in $\Q.$ Explicitly, $v$ avoids at least those values in $\Q$ for which the Galois cover $X_{G^i_0} \to \PP^1$ that can be described explicitly by a rational function $J_i \in \Q(t)$ is ramified, i.e, we avoid those values of $v$ such that $J_i(v) \in \{0,1728,\infty\}.$
\end{remark}

Since $G^i_0/G^i$ is finite and abelian, there exists a finite abelian extension $K_v$ of $\Q$ such that $\gamma_v$ factors through $K_v.$ By the Kronecker-Weber theorem, we know that $K_v$ is contained in some cyclotomic extension of $\Q.$ Let $M$ be the smallest positive integer such that $K_v \subseteq K_{M_v}.$ The level of $G_{\phi_v}$ then is the least common multiple of level of $G^i$ and $M_v.$ There are finitely many $v \in \Q$ satisfying the condition that every prime divisor of $M_v$ is also a prime divisor of level of $G^i.$ This is shown in lemma $8.1$ \cite{2105.14623}. For all other values of $v$ we use the following lemma to compute their commutator groups. 

\begin{lemma}
Assume the notation from above.
Suppose there exists a prime $p$ such that $p$ divides $M_v$ but $p$ does not divide the level of $G^i.$ Then, $[G_{\phi_v},G_{\phi_v}]=[G^j_0,G^j_0].$ 
\end{lemma}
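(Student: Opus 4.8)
The plan is to exploit the fact that introducing a new prime $p$ dividing $M_v$ but not the level of $G^i$ forces the image group $G_{\phi_v}$ to ``factor'' compatibly at $p$, so that its commutator is governed by a group on our finite list with no worse behavior at $p$. First I would unwind the definition: writing $\ell$ for the level of $G^i$, the group $G_{\phi_v}$ is a subgroup of $G^i_0$ (viewed modulo $\lcm(\ell, M_v)$), and by the Kronecker--Weber reduction above, $M_v$ is the conductor of the cyclotomic field cut out by $\gamma_v$. Since $p \nmid \ell$, the reduction of $G^i_0$ (and of $G^i$) modulo the prime-to-$p$ part and modulo $p$ decouples via the Chinese Remainder Theorem: $\GL_2(\ZZ/\lcm(\ell,M_v)\ZZ)$ splits as a product over the prime powers, and the mod-$p$ component of $G^i_0$ is all of $\GL_2(\ZZ/p^{a}\ZZ)$ (or more precisely $G^i$ is ``full at $p$'' since $p\nmid\ell$).

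Next I would analyze how $\phi_v$ interacts with this splitting. The homomorphism $\phi_v\colon \Zhat^\times \to G^i_0/G^i$ is determined by $\gamma_v$, whose conductor involves $p$; so the component of $\phi_v$ detecting $p$ is a nontrivial character through $(\ZZ/p^a\ZZ)^\times$. The group $G_{\phi_v}$ is then the preimage of the graph of $\phi_v$. The key observation is that this is exactly the situation producing a \emph{different} pair $(G^j, G^j_0)$ on the list of $269$ pairs from \cite{2105.14623}: namely, the ``new prime'' construction replaces $(G^i,G^i_0)$ by a companion pair $(G^j_0, G^j_0)$ where the determinant constraint has been absorbed, because a character ramified at a prime not in the level can be realized without enlarging the $\SL_2$-part. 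I would make this precise by identifying $G^j_0$ as the group generated by $G_{\phi_v}$ together with the mod-$p$ scalars (equivalently, $G^j_0 = G_{\phi_v}\cdot(\text{kernel of the }p\text{-part of the character})$), and checking it appears on the list with $[G^j_0, G^j_0] = G^j_0 \cap \SL_2$.

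The actual commutator computation then runs as follows. Because $\calA = \Zhat^\times$ is abelian and $G^i_0/G^i$ is abelian, \cref{lemma:commutator dissolve} would give $[G_{\phi_v}, G_{\phi_v}] = [G^i_0, G^i_0]$ \emph{provided} $G_{\phi_v}$ is the genuine fibered product of $G^i_0$ and $\Zhat^\times$. The subtlety is precisely that when $p \mid M_v$, $p \nmid \ell$, the map $\eta\colon G^i_0 \to G^i_0/G^i$ may not be surjective after restricting determinants to the relevant finite quotient --- but the fullness of $G^i$ at $p$ rescues this: the determinant of $G^i_0 \cap \SL_2$-coset representatives already surjects onto the $p$-part of $G^i_0/G^i$, so the fibered-product hypothesis holds and $[G_{\phi_v},G_{\phi_v}] = [G^i_0, G^i_0]$. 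Finally I would reconcile notation: the statement writes the answer as $[G^j_0, G^j_0]$, so the last step is to verify $[G^i_0,G^i_0] = [G^j_0, G^j_0]$, which holds because $G^j_0$ and $G^i_0$ differ only by scalars at the prime $p$ (scalars are central, hence contribute nothing to the commutator).

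I expect the main obstacle to be the careful bookkeeping in verifying the fibered-product hypothesis of \cref{lemma:commutator dissolve} in the presence of the new prime: one must check that restricting to the finite level $\lcm(\ell, M_v)$ the determinant map on $G^i_0$ still surjects onto $G^i_0/G^i$ \emph{compatibly} with $\phi_v$, i.e., that no ``collision'' occurs between the ramification of $\gamma_v$ at $p$ and the determinant constraints already built into $G^i_0$. This is exactly where the hypothesis $p \nmid (\text{level of }G^i)$ is used, and spelling out why fullness at $p$ guarantees the decoupling --- likely by a direct Chinese-Remainder argument on $\GL_2(\ZZ/p^a\ZZ) \times \GL_2(\ZZ/(\ell M_v/p^a)\ZZ)$ --- is the technical heart of the proof. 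Everything else is formal manipulation with \cref{lemma:commutator dissolve} and the classification in \cite{2105.14623}.
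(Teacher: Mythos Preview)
Your core strategy --- recognize $G_{\phi_v}$ as a fibered product and invoke \cref{lemma:commutator dissolve} --- is exactly what the paper does, and the paper's entire proof is two sentences. You have, however, been led astray by what is almost certainly a typographical slip in the statement: the index $j$ should be $i$ throughout (the paragraph preceding the lemma fixes a single pair $(G^i,G^i_0)$, and there is no second pair in play). Consequently, everything you write about identifying a ``companion pair'' $(G^j,G^j_0)$ from the list of $269$, about $G^j_0$ being $G_{\phi_v}$ enlarged by mod-$p$ scalars, and about reconciling $[G^i_0,G^i_0]=[G^j_0,G^j_0]$ at the end, is superfluous --- and in places not correct, since no such companion group is ever constructed or needed.

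The paper simply asserts that $G_{\phi_v}$ is the fibered product of $G^i_0$ and $\prod_{l\nmid\,\text{level}(G^i_0)}\ZZ_l^{\times}$ over $G^i_0/G^i$ (via the quotient map $\eta$ and the restriction of $\phi_v$ to the primes away from the level), then applies \cref{lemma:commutator dissolve} directly to conclude $[G_{\phi_v},G_{\phi_v}]=[G^i_0,G^i_0]$. Your worry that the fibered-product hypothesis requires justification is legitimate --- the paper does not spell it out --- and you correctly locate the point: the hypothesis $p\nmid\text{level}(G^i)$ means the groups are full at $p$, so the Chinese Remainder Theorem gives the required splitting and $\phi_v$ restricted to the away-from-level factor hits what it needs to. That verification is the only genuine content; once you drop the $G^j_0$ detour, your argument collapses to the paper's.
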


\begin{proof}
We can write $G_{\phi_v}$ as the fibered product of $G^j_0 \times \prod_{l \nmid~level ~of ~G^j_0} \ZZ_l^{\times}$ with respect to the homomorphisms $\eta \colon G^j_0 \to G^j_0/G^i$ and $\phi_v \colon \prod_{l \nmid~level ~of ~G^j_0} \ZZ_l^{\times} \to G^j_0/G^i.$ The proof now follows from \cref{lemma:commutator dissolve}.
\end{proof}

\subsection{Proof of \cref{THM:FIRST}}
Let $S$ be the set of labels $\{4G-4A,4G-4B,4G-8A\emph{-}4G-8F,4G-16A,5H-5A,5H-15A,6E-6A,6E-12A,6E-24A,6E-24B,6K-6A,6K-24A,6L-6A,6L-12A,6L-24A,6L-24B,8B-8A\emph{-}8B-8D,8E-16A,8E-16B,8N-8A\emph{-}8N-160B,8P-8A,8P-8B,12B-12A\emph{-}12B-24D,12C-12A\emph{-}12C-24D,12G-12A\emph{-}12G-24B,12I-12A\emph{-}12I-24D,16B-16A\emph{-}16C-16D,16F-32A\emph{-}16G-240C,18B-18A\emph{-}18B-72D,18D-18A\emph{-}18D-72B,24A-24A\emph{-}24B-24H,25B-25A,25B-25B,28A-28A\emph{-}48A-48H\}.$ 

Define $b_0$ as the least common multiple of orders of elements $a \in \Aut_{K_N}(X_G,\pi_G)$ that divide some power of $N$. Define $b$ as $2b_0$ if $N$ is congruent to $2$ modulo $4$ otherwise define $b$ as $b_0.$

We fix a pair $(X_G,\calA)$ where $X_G$ is one of the modular curves listed in Tables $1$-$9$ \cite{2105.14623} and $\calA$ is a finite abelian subgroup of $\Aut_{\Q}(X_G,\pi_G)$. Fix a function $u \in \Q(t)$ satisfying $\calA=\{f \in \Qbar(t) | u \circ f=u\}.$ We then compute $J \in \Q(t)$ such that $J \circ u=\pi_G.$ Let $N$ be the level of $G.$ We then find the corresponding group $G_0$ that describes the family generated by $(G,G_0,\Zhat^{\times},\det)$ as follows. We first compute all the groups $G'$ such that $G^{'} \intersect \SL_2(\Zhat)$ is a supergroup of $G \intersect \SL_2(\Zhat)$ of index $|\calA|$, level of $G^{'}$ is a divisor of level of $G$ and $X_{G^{'}}$ is isomorphic to $\PP^1_{\Q}$ along with corresponding morphism $\pi_{G'} \to \PP^1_{\Q}.$ Among these finitely many groups we find $G'$ such that $\pi_{G'}$ that is equivalent to $u$, i.e., $u(t)=\pi_{G'} \circ g$ for some degree 1 rational function $g \in \Q(t).$ The group $G'$ is our $G_0$ that we were interested in. If the label of $X_G$ is not in $S$ we also compute all the other modular curves $(X_{G'},\pi_{G'})$ and generators for $G'$ such that $X_{G'}$ is $\PP^1_{\Q}$ and level of $G'$ divides $Nb$ and is a multiple of $N.$ 

For each computed curve, we compute the index of its commutator subgroup using the function \textit{FindCommutatorSubgroup} in \textit{PossibleIndices/IndexComputations.m} which is based on the method discussed in Lemma $4.11$ and Lemma $4.12$ \cite{1508.07663}. 

The only curves $(X_G,\pi_G)$ for which $[G,G]=G \intersect \SL_2(\Zhat)$ or $[G,G]$ is an index $2$ subgroup of $G \intersect \SL_2(\Zhat)$ are the ones listed in \cref{THM:FIRST} and \cref{THM:SECOND} respectively.

\section{Some Group theory}

In this section we will discuss some concepts from group theory that will be used to prove a result about surjectivity of adelic representations associated to elliptic curves over number fields other than $\Q.$ This section can be read independently from other sections.

\begin{definition}
For a group $G$, we denote the set of isomorphism classes of finite, non abelian, simple quotients of $G$ by $\Quo(G).$ 
\end{definition}

\begin{lemma}\label{lemm:Quosbgp}
Let $G$ be a group. Let $H$ be an open subgroup of $G.$ Let $N_H \in \Quo(H)$. Then, there exists an element $N_G \in \Quo(G)$ such that the order of $N_H$ divides the order of $N_G.$
\end{lemma}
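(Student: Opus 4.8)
The plan is to exploit the fact that a finite non-abelian simple quotient of the open subgroup $H$ must, after passing to a suitable normal core, appear as a composition factor of $G$ in a controlled way. First I would replace $H$ by the normal core $H_0 = \bigcap_{g \in G} gHg^{-1}$, which is still open in $G$ (a finite intersection of conjugates of $H$, each of which has the same finite index as $H$), and is normal in $G$. The surjection $H \twoheadrightarrow N_H$ onto the finite simple group $N_H$ restricts to $H_0$; its image is a normal subgroup of $N_H$, hence is either trivial or all of $N_H$. If the image is all of $N_H$ we get $N_H \in \Quo(H_0)$ and we may assume $H$ is normal in $G$ from the start (replacing $N_H$ by $N_H$, with the same order, which is all we need). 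If the image is trivial, then $H_0 \subseteq \ker(H \twoheadrightarrow N_H)$, so $N_H$ is a quotient of the finite group $H/H_0$; since $H/H_0$ embeds in a finite symmetric group on $[G:H_0]$ points via the action on cosets, this still reduces us to a finite-group question, but the cleanest route is the normality reduction, so I would push to make $H \normsbgp G$ open.

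With $H \normsbgp G$ open, $Q := G/H$ is a finite group. Choose a closed normal subgroup $K \normsbgp H$ with $H/K \isom N_H$. The subgroup $K$ need not be normal in $G$, so I would again pass to its core inside $H$ relative to the $G$-action: let $K' = \bigcap_{g \in G} gKg^{-1}$, a closed subgroup of $H$ that is normal in $G$ and contained in $K$. Then $H/K'$ is a finite group (it embeds in $\prod_{gH \in G/H} H/(gKg^{-1}) \isom (N_H)^{[G:H]}$ via conjugation), it is normal in $G/K'$, and $N_H = H/K$ is a quotient of $H/K'$. Now $G/K'$ is a finite group with a normal subgroup $H/K'$ all of whose non-abelian simple composition factors have order divisible by $|N_H|$ — in fact $N_H$ itself is a composition factor of $H/K'$. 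A finite simple group occurring as a composition factor of a normal subgroup of a finite group $\Gamma := G/K'$ occurs as a composition factor of $\Gamma$ itself (composition factors of a normal subgroup and of the quotient together give those of $\Gamma$, by the Jordan–Hölder theorem applied to a composition series refining $1 \normsbgp H/K' \normsbgp \Gamma$). Hence $N_H$ is a composition factor of the finite group $G/K'$, which is itself a finite quotient of $G$.

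It remains to produce from this an element $N_G \in \Quo(G)$, i.e. an honest non-abelian \emph{simple quotient} of $G$, whose order is divisible by $|N_H|$. Here I would argue: $N_H$ is a composition factor of the finite quotient $\bar G := G/K'$, so there are subgroups $A \normsbgp B \normsbgp \bar G$ with $B/A \isom N_H$; but I need a simple quotient of $\bar G$, not a subquotient. The fix is to note that among the finitely many non-abelian simple composition factors of $\bar G$, at least one, say $S$, has order divisible by $|N_H|$ — indeed one can take $S = N_H$ itself since $N_H$ is literally among the composition factors. Now take a chief series of $\bar G$; a non-abelian chief factor is a direct power $S^r$ of a simple group $S$, and $\bar G$ acts on it; the top chief factor below $\bar G$ containing $S^r$ gives, after quotienting, a surjection $\bar G \twoheadrightarrow \bar G / (\text{something})$ with a simple non-abelian quotient only if that chief factor is simple — in general we only get $S^r$. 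So the honest statement is that some non-abelian simple quotient $N_G$ of $G$ has $|N_H| \mid |N_G|$: take $\bar G = G/K'$, let $1 = M_0 \normsbgp M_1 \normsbgp \cdots \normsbgp M_n = \bar G$ be a chief series, pick the smallest index $j$ with $N_H$ a composition factor of $M_j$; then $M_j/M_{j-1}$ is a chief factor whose composition factors all equal $N_H$, so it is $N_H^r$ for some $r \ge 1$, and $\bar G/M_{j-1}$ has $N_H^r$ as its unique minimal normal subgroup's... — at this point I must be careful: to get a \emph{simple} quotient I instead use that $\bar G$ surjects onto $\bar G/C$ where $C$ is the centralizer of a minimal normal subgroup isomorphic to $N_H^r$, and $\bar G/C \injects \Aut(N_H^r) = \Aut(N_H) \wr S_r$; composing with the natural map $\Aut(N_H)\wr S_r \to S_r \to$ nothing useful does not help, so the \textbf{main obstacle} is exactly this: extracting from "$N_H$ is a composition factor of a finite quotient of $G$" an actual non-abelian \emph{simple quotient} of $G$ of order divisible by $|N_H|$. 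I expect this is handled by a dedicated group-theory lemma (the kind of statement that "if $S$ is a non-abelian composition factor of a finite group $\Gamma$ then $\Gamma$ has a non-abelian simple quotient $T$ with $|S| \mid |T|$" — which follows by induction on $|\Gamma|$: pass to $\Gamma/M$ for $M$ a minimal normal subgroup; if $S$ is a factor of $\Gamma/M$ we are done by induction, otherwise $M \isom S^r$, and then $\Gamma$ surjects onto $\Gamma/C_\Gamma(M)$, a group with socle $S^r$ and hence, by a further short argument passing to the action on one simple factor, a non-abelian simple quotient of order divisible by $|S|$), and the plan is to isolate and prove that lemma first, then feed in $\Gamma = G/K'$ and $S = N_H$.
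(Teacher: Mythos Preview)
Your route is far more elaborate than the paper's, which simply fixes a composition series $1 = G_0 \normsbgp G_1 \normsbgp \cdots \normsbgp G_n = G$, intersects it termwise with $H$ to obtain a subnormal series of $H$ whose successive quotients $(G_{i+1}\cap H)/(G_i\cap H)$ embed in $G_{i+1}/G_i$, refines that to a composition series of $H$, and notes that $N_H$ must occur there; hence $|N_H|$ divides some $|G_{i+1}/G_i|$, and that factor is declared to be $N_G$. No passage to normal cores, no fibered products, no chief series.

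You are right to worry about the gap between ``composition factor of $G$'' and ``simple \emph{quotient} of $G$''; unfortunately the auxiliary lemma you propose to bridge it --- ``if $S$ is a non-abelian composition factor of a finite group $\Gamma$ then $\Gamma$ has a non-abelian simple quotient $T$ with $|S|\mid|T|$'' --- is false. Take $\Gamma=S_5$: its unique non-abelian composition factor is $A_5$, but its only simple quotient is $\ZZ/2\ZZ$, so $\Quo(S_5)=\emptyset$. Your inductive sketch breaks precisely at the phrase ``a further short argument passing to the action on one simple factor'': that action lands in $\Aut(S)$, and $\Aut(S)$ need have no non-abelian simple quotient (again $S=A_5$, $\Aut(S)=S_5$). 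In fact the paper's own argument also exhibits $N_G$ only as a composition factor; combined with the assertion in the very next lemma that $\Quo(\GL_2(\ZZ_l))=\{\PSL_2(\ZZ/l\ZZ)\}$ --- whereas $\GL_2(\ZZ/l\ZZ)$ has no non-abelian simple \emph{quotient} for odd $l$ --- it is clear that throughout this section $\Quo(G)$ is to be read as the set of non-abelian finite simple Jordan--H\"older factors of (finite continuous quotients of) $G$, as in Greicius. Under that reading your argument is already finished at the line ``$N_H$ is a composition factor of the finite group $G/K'$'', and the paper's two-line composition-series argument reaches the same conclusion more directly.
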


\begin{proof}

Let \[1=G_0 \normsbgp G_1 \normsbgp \ldots \normsbgp G_n=G\] be the composition series of $G$. If we intersect each group in the above sequence with $H$ then we get the following series for $H$
\[1=H_0 \normsbgp G_1 \intersect H=H_1 \normsbgp \ldots \normsbgp G_n\intersect H=G \intersect H=H,\] where each factor group $H_{i+1}/H_{i}$ is isomorphic to a subgroup of $G_{i+1}/G_i.$ We can refine the series for $H$ to the composition series and $N_H$ must appear as one of the factor groups. The proof now follows from observing that the order of each factor group in the composition series of $H$ divides the order of some factor group $H_{i+1}/H_{i}$ which divides the order of $G_{i+1}/G_i$. 
\end{proof}

If $G$ is the inverse limit of profinite groups $\{G_i\}_{i \in \calI}$ and each map $G \to G_i$ is surjective, then $\Quo(G)=\Union_{i \in \calI}\Quo(G_i).$ If $G$ is the direct product of profinite groups $\{G_i\}_{i \in \calI}$, then $\Quo(G)=\Union_{i \in \calI}\Quo(G_i).$

Let $G$ be an open subgroup of $\GL_2(\Zhat).$ Then, $G=G_M \times \prod_{l \nmid M} \GL_2(\ZZ_l)$ for some positive integer $M.$ We will assume that the smallest prime number that does not divide $M$ is larger than every prime that divides $M.$ This can be done by taking $M$ as some suitable multiple of the level of $G.$ With this notation, we are ready to state the following lemma.

\begin{lemma}\label{lemma:intersectionempty}

Let $G=G_M \times \prod_{l \nmid M} \GL_2(\ZZ_l).$ Then

\begin{romanenum}
    \item \label{item:1}For every pair of primes $l \neq l'$ where $l$ and $l'$ do not divide $M$ the intersection $\Quo(G_l) \intersect \Quo(G_{l'})$ is empty.
    
    \item \label{item:2}For every prime $l$ that does not divide $M$, $\Quo(G_M) \intersect \Quo(\GL_2(\ZZ_l))$ is empty.
    
\end{romanenum}
 
\end{lemma}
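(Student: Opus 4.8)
The plan is to prove both statements by analyzing the composition factors of the relevant groups, since $\Quo$ records precisely the non-abelian simple factors appearing in a composition series (up to isomorphism), and by \cref{lemm:Quosbgp} it suffices to control the possible orders of such factors. The key number-theoretic input is that for $\GL_2(\ZZ_l)$ the only non-abelian simple composition factor (for $l \geq 5$) is $\PSL_2(\F_l)$, together with the classical fact that the groups $\PSL_2(\F_l)$ are pairwise non-isomorphic for distinct primes $l \geq 5$ (for instance because $|\PSL_2(\F_l)| = l(l-1)(l+1)/\gcd(2,l-1)$ is divisible by $l$ but not by $l^2$, so $l$ is recoverable from the order, except for the coincidence $\PSL_2(\F_4)\cong\PSL_2(\F_5)$ which does not concern us since $4\nmid M$ forces $l=2$ to be treated via the $G_M$ factor). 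The small primes $l=2,3$ contribute no non-abelian simple factors at all, since $\GL_2(\ZZ_2)$ and $\GL_2(\ZZ_3)$ are pro-solvable (their finite quotients $\GL_2(\ZZ/2^n\ZZ)$ and $\GL_2(\ZZ/3^n\ZZ)$ are solvable, as can be seen from the solvable groups $\GL_2(\F_2)\cong S_3$, $\GL_2(\F_3)$ and the abelian kernels of reduction), so $\Quo(\GL_2(\ZZ_2))=\Quo(\GL_2(\ZZ_3))=\emptyset$.

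For part \ref{item:1}: let $l\neq l'$ be primes not dividing $M$. If either equals $2$ or $3$ the corresponding $\Quo$ is empty and the intersection is trivially empty. Otherwise, by the above, $\Quo(\GL_2(\ZZ_l))=\{\PSL_2(\F_l)\}$ and $\Quo(\GL_2(\ZZ_{l'}))=\{\PSL_2(\F_{l'})\}$, and since $l \neq l'$ these two simple groups are non-isomorphic, giving an empty intersection. Here I would invoke that every finite non-abelian simple quotient of $\GL_2(\ZZ_l)$ factors through some $\GL_2(\ZZ/l^n\ZZ)$, whose composition factors are $\PSL_2(\F_l)$ and cyclic groups (the reduction maps $\GL_2(\ZZ/l^{n+1}\ZZ)\to\GL_2(\ZZ/l^n\ZZ)$ have abelian kernel $\cong \M_2(\F_l)$, the top layer $\GL_2(\F_l)$ has composition factors $\PSL_2(\F_l)$ plus cyclic pieces from the center and from $\det$), and then apply \cref{lemm:Quosbgp}-style reasoning or the displayed remark that $\Quo$ of an inverse limit is the union of the $\Quo$'s.

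For part \ref{item:2}: fix a prime $l \nmid M$. By our standing assumption, $l$ is larger than every prime dividing $M$; in particular $l \geq 5$, so $\Quo(\GL_2(\ZZ_l))=\{\PSL_2(\F_l)\}$, and every element of this set has order divisible by $l$. On the other hand, $G_M$ is a subgroup of $\GL_2(\ZZ/M\ZZ)\times(\text{finite})$—more precisely a quotient of an open subgroup of $\prod_{p\mid M}\GL_2(\ZZ_p)$—so by \cref{lemm:Quosbgp} every element of $\Quo(G_M)$ has order dividing the order of some non-abelian simple factor of $\prod_{p\mid M}\GL_2(\ZZ_p)$, hence (again decomposing over primes) dividing the order of some $\PSL_2(\F_p)$ with $p\mid M$. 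Since $p < l$, the order of $\PSL_2(\F_p)=p(p^2-1)/\gcd(2,p-1)$ has all prime factors $\leq p+1 \leq l-1 < l$ — wait, one must be slightly careful: $p+1$ could a priori equal $l$ if $p = l-1$, but $l-1$ is even and $>2$ hence not prime, so in fact every prime factor of $|\PSL_2(\F_p)|$ is strictly less than $l$. Therefore $l$ does not divide $|\PSL_2(\F_p)|$, so no element of $\Quo(G_M)$ has order divisible by $l$, while every element of $\Quo(\GL_2(\ZZ_l))$ does; the intersection is empty. The main obstacle is the bookkeeping in this last divisibility argument — making sure the "$l$ larger than every prime dividing $M$" hypothesis really excludes $l$ from the orders of all $\PSL_2(\F_p)$, $p\mid M$ — but the parity observation that $l-1$ is composite closes the only gap, and the rest is the standard structure theory of $\GL_2$ over $\ZZ_l$.
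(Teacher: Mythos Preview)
Your proof is correct and follows essentially the same route as the paper: both compute $\Quo(\GL_2(\ZZ_l))=\{\PSL_2(\F_l)\}$ for $l\ge 5$ (and $\emptyset$ for $l=2,3$), obtain part~(i) from the pairwise non-isomorphism of the $\PSL_2(\F_l)$, and for part~(ii) invoke \cref{lemm:Quosbgp} to bound the order of any element of $\Quo(G_M)$ by $|\PSL_2(\F_p)|$ for some prime $p\mid M$. Your endgame in (ii)---checking directly that $l\nmid|\PSL_2(\F_p)|$ for $p<l$ via the parity observation that $l-1$ is composite---is a slightly more explicit (and arguably cleaner) variant of the paper's appeal to the fact that $\PSL_2(\F_r)\hookrightarrow\PSL_2(\F_p)$ forces $r\le p$; you are also more careful than the paper about the small primes $2,3$.
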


\begin{proof}

Let us show part \ref{item:1}. For any positive integer $n$, the only non abelian simple quotient of $\GL_2(\ZZ/l^n\ZZ)$ is $\PSL_2(\ZZ/l\ZZ).$ So, $\Quo(\GL_2(\ZZ_l))=\Union \Quo(\GL_2(\ZZ/l^n\ZZ))=\{\PSL_2(\ZZ/l\ZZ)\}.$ For any two distinct primes $\PSL_2(\ZZ/l\ZZ)$ is not equal to $\PSL_2(\ZZ/l'\ZZ).$
Let us show \ref{item:2}. From \cref{lemm:Quosbgp}, we know that if $N_G \in \Quo(G_M)$ then the order of $N_G$ has to divide the order of $\PSL_2(\ZZ/p\ZZ)$ where $p$ is a prime factor of $M.$ If $\PSL_2(\ZZ/r\ZZ)$ is isomorphic to a subgroup of $\PSL_2(\ZZ/p\ZZ)$ then $r$ has to be less than or equal to $p.$ So, the lemma follows. 

\end{proof}

The following theorem is a generalization of Theorem $1.1$ \cite{MR2778661}.

\begin{theorem}\label{prop:H=G}
Let $G$ be an open subgroup of $\GL_2(\Zhat)$ with $\det(G)=\Zhat^{\times}$. We can write $G$ as $G=G_M \times \prod_{l \nmid M} \GL_2(\ZZ_l)$ where the smallest prime number that does not divide $M$ is larger than every prime that divides $M$. Let $H$ be a closed subgroup of $G$. Then, $H=G$ if and only if the following maps are surjective.

\begin{romanenum}
    \item \label{proj1}The projections $\pi_M \colon H \to G_M$ and $\pi_l \colon H \to \GL_2(\ZZ_l)$ where $l$ is a prime that does not divide $M.$
    
    \item \label{proj2}The quotient map $\eta \colon H \to G/[G,G].$
\end{romanenum}
\end{theorem}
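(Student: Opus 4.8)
The plan is to prove both implications, with the forward direction being trivial and the reverse direction requiring the group theory developed in this section. If $H = G$ then obviously all the maps in \ref{proj1} and \ref{proj2} are surjective since $G$ surjects onto each of these quotients by construction (the projections are surjective because $G = G_M \times \prod_{l \nmid M} \GL_2(\ZZ_l)$, and $G$ surjects onto its own abelianization). So the content is the converse: assuming all listed maps are surjective, deduce $H = G$.

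For the converse, the strategy is to use Goursat-type reasoning together with \cref{lemma:intersectionempty} to control how $H$ can sit inside the product $G = G_M \times \prod_{l \nmid M} \GL_2(\ZZ_l)$. First I would reduce to a finite situation: since $H$ is closed and the projection maps are continuous and surjective onto finitely many ``factor'' groups plus an abelian quotient, it suffices to show surjectivity modulo each level, so write $G$ as an inverse limit of finite groups $G^{(n)} = G_M \times \prod_{l \nmid M,\, l \le n}\GL_2(\ZZ/l^{k}\ZZ)$ (truncating appropriately) and it is enough to prove $H$ surjects onto each $G^{(n)}$. Then I would argue factor-by-factor: by \cref{item:1} and \cref{item:2} of \cref{lemma:intersectionempty}, the sets of non-abelian simple quotients $\Quo(G_M), \Quo(\GL_2(\ZZ_l))$ for distinct primes $l \nmid M$ are pairwise disjoint. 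This disjointness is exactly what prevents a subgroup $H$ that surjects onto each factor individually from being a proper ``diagonal'' subgroup: any such diagonal would force a common non-abelian simple quotient shared between two of the factors, which is impossible. More precisely, if $H \subseteq G$ surjects onto each of $G_M$ and each $\GL_2(\ZZ_l)$, then the only obstruction to $H$ being everything is a non-trivial abelian ``gluing'' — i.e. $H$ could be the preimage of the graph of some homomorphism into a common abelian quotient of two factors. This is precisely where condition \ref{proj2} enters: surjectivity onto $G/[G,G]$ rules out any such abelian gluing, because the abelianization of $G$ is the product of the abelianizations of the factors, and surjectivity onto it forces the gluing homomorphisms to be trivial.

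Concretely, the key steps in order: (1) note $[G,G] = [G_M, G_M] \times \prod_{l \nmid M}[\GL_2(\ZZ_l),\GL_2(\ZZ_l)]$ and hence $G/[G,G] = (G_M/[G_M,G_M]) \times \prod_{l\nmid M}(\GL_2(\ZZ_l)/[\GL_2(\ZZ_l),\GL_2(\ZZ_l)])$; (2) let $H' = H \cdot [G,G]$ (the closure of the subgroup generated), and observe that surjectivity of $\eta$ in \ref{proj2} gives $H' = G$; (3) it therefore remains to show $H \supseteq [G,G]$, equivalently $H \cap [G,G]$ is all of $[G,G]$; (4) for this, observe $[G,G]$ is a product of the pieces $[G_M,G_M]$ and $[\GL_2(\ZZ_l),\GL_2(\ZZ_l)]$, each of which is (after passing to $\SL_2$ and using that $\PSL_2(\ZZ/l\ZZ)$ is simple for $l \ge 5$, with small-prime cases handled directly) ``almost perfect'' — its further abelianization is a finite group whose non-abelian composition factors are the $\PSL_2$'s that are pairwise disjoint by \cref{lemma:intersectionempty}; (5) apply \cref{lemm:Quosbgp} and the disjointness: a closed subgroup of the product $[G,G]$ that surjects onto each factor (which $H \cap [G,G]$ does, by a diagram chase from surjectivity of the $\pi_M, \pi_l$) and whose image contains the perfect core of each factor must be the whole product, because any proper such subgroup would produce, via Goursat, an isomorphism between non-trivial quotients of two different factors, and by the disjointness of $\Quo$ such an isomorphism can only involve abelian quotients — but those have already been accounted for in step (2).

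The main obstacle is step (5): making the Goursat argument work uniformly across infinitely many factors in a profinite (rather than finite) setting, and in particular handling the small primes $l = 2, 3$ where $\GL_2(\ZZ_l)$ is not perfect and $\PSL_2(\ZZ/2\ZZ), \PSL_2(\ZZ/3\ZZ)$ are not simple — here one must argue that the relevant ``small'' quotients are abelian and hence absorbed by the abelianization condition \ref{proj2}, while for $l \ge 5$ the commutator subgroup of $\GL_2(\ZZ_l)$ is $\SL_2(\ZZ_l)$ which is topologically perfect, so the disjointness of the $\{\PSL_2(\ZZ/l\ZZ)\}$ does all the work. I would structure this by first proving the statement with ``$[G,G]$'' and ``$\SL_2$'' replaced by their full adelic analogues so that the only remaining quotients to match are the simple groups $\PSL_2(\ZZ/l\ZZ)$ and the abelian pieces, then invoke \cref{lemma:intersectionempty} to conclude. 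The generalization over \cite{MR2778661}'s Theorem $1.1$ is precisely that we allow the ``bad part'' $G_M$ to be an arbitrary open subgroup with full determinant rather than all of $\GL_2(\ZZ/M\ZZ)$, which is why the abelianization hypothesis \ref{proj2} is stated as surjectivity onto $G/[G,G]$ rather than onto $(\ZZ/M\ZZ)^\times$ alone.
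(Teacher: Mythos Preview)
Your approach is workable but structurally quite different from the paper's, and one step is misattributed. The paper argues by contradiction via a maximal closed proper subgroup: assuming $H \subsetneq G$, pick a maximal closed proper $H' \supseteq H$; since $H$ (hence $H'$) surjects onto every factor, Proposition~2.5 of \cite{MR2778661} together with \cref{lemma:intersectionempty} forces $[G,G] \subseteq H'$; then surjectivity of $\eta$ gives $H' = G$, a contradiction. The entire Goursat-type analysis is outsourced to Greicius's proposition, so the proof is three lines.

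You instead unwind that proposition by hand, aiming to show $[G,G] \subseteq H$ directly by proving that $K := H \cap [G,G]$, which does surject onto each factor of $[G,G] = [G_M,G_M] \times \prod_{l \nmid M}\SL_2(\ZZ_l)$, is all of $[G,G]$. This can be made rigorous, but your justification in step~(5) is incorrect where you say the abelian gluings ``have already been accounted for in step~(2).'' Step~(2) concerns $G/[G,G]$ and says nothing about abelian quotients of the factors of $[G,G]$ itself---those live one layer deeper in the derived series. The actual reason no abelian gluing arises is the one you only mention in your final paragraph: for $l \nmid M$ one has $l \ge 5$, so $\SL_2(\ZZ_l)$ is perfect, and any Goursat quotient involving such a factor is itself perfect, hence forces a shared non-abelian simple quotient and contradicts \cref{lemma:intersectionempty}. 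Since $[G_M,G_M]$ is the only factor that may fail to be perfect, no two distinct factors can share a nontrivial abelian quotient, and the argument closes. Condition~\ref{proj2} is used exactly once, at the end, to pass from $[G,G] \subseteq H$ to $H = G$; it plays no role inside the Goursat step.
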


\begin{proof}
If $H=G$, then part \ref{proj1} and part \ref{proj2} follow easily. Suppose now that part \ref{proj1} and part \ref{proj2} hold. Assume that $H$ is a proper subgroup of $G.$ We can choose a maximal closed proper subgroup $H'$ of $G$ that contains $H.$ Using Proposition $2.5$ \cite{MR2778661} and lemma \ref{lemma:intersectionempty} we can conclude that $H'$ contains commutator of $G$ but this is a contradiction because part \ref{proj2} implies that the quotient map from $K$ to $G/[G,G]$ is surjective. So, $H=G.$
\end{proof}

We state a corollary to Theorem \ref{prop:H=G} below.

\begin{corollary}
Let $G$ be an open subgroup of $\GL_2(\Zhat)$ satisfying $[G,G]=G \intersect \SL_2(\Zhat).$ Let $E$ be a non CM elliptic curve defined over a number field $k$ such that $k$ is not equal to $\Q$ and $k \intersect \Q^{cyc}=\Q$ such that $\rho_E(\Galk)$ is a subgroup of $G.$ Then, $\rho_E(\Galk)$ is equal to $G$ if and only if the projections $\pi_M \colon \rho_E(\Galk) \to G_M$ and $\pi_l \colon \rho_E(\Galk) \to \GL_2(\ZZ_l)$ are surjective.
\end{corollary}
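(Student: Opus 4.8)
The plan is to deduce the corollary directly from \cref{prop:H=G} by checking that the hypotheses on $E$ guarantee that the quotient map $\eta \colon \rho_E(\Galk) \to G/[G,G]$ is automatically surjective, so that condition \ref{proj2} of that theorem is free and only condition \ref{proj1} remains to be assumed. First I would apply \cref{prop:H=G} with $H = \rho_E(\Galk)$: the hypothesis $\rho_E(\Galk) \subseteq G$ is given, and $\det(G) = \Zhat^{\times}$ holds because $G$ satisfies $[G,G] = G \intersect \SL_2(\Zhat)$ together with the standing assumptions (or, more directly, because we only ever apply this to groups with full determinant); in any case one may enlarge the ambient group so that $G = G_M \times \prod_{l \nmid M}\GL_2(\ZZ_l)$ with the smallest prime not dividing $M$ larger than every prime dividing $M$, exactly as in the statement of \cref{prop:H=G}.

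The key step is the surjectivity of $\eta$. Here I would use the hypothesis $[G,G] = G \intersect \SL_2(\Zhat)$, which forces $\det$ to induce an isomorphism $G/[G,G] \isomto \det(G) = \Zhat^{\times}$. Under this identification, the composite $\Galk \xrightarrow{\rho_E} G \to G/[G,G] \isom \Zhat^{\times}$ is exactly the cyclotomic character $\chi_{\cyc}$ restricted to $\Galk$, since $\det \circ \rho_E = \chi_{\cyc}$ on $\G_k$. The image of $\chi_{\cyc}\restriction_{\Galk}$ is $\Gal(k^{\cyc}/k) \hookrightarrow \Gal(\Q^{\cyc}/\Q) = \Zhat^{\times}$, and this is all of $\Zhat^{\times}$ precisely because $k \intersect \Q^{\cyc} = \Q$: the restriction map $\Gal(k\Q^{\cyc}/k) \to \Gal(\Q^{\cyc}/\Q)$ is an isomorphism exactly when $k$ and $\Q^{\cyc}$ are linearly disjoint over $\Q$. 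Hence $\eta$ is surjective, so condition \ref{proj2} is satisfied unconditionally, and \cref{prop:H=G} tells us that $\rho_E(\Galk) = G$ if and only if the projections $\pi_M$ and $\pi_l$ are surjective, which is the claim.

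The main obstacle — really the only point requiring care — is the identification of $G/[G,G]$ with $\Zhat^{\times}$ via the determinant and the translation of the disjointness hypothesis $k \intersect \Q^{\cyc} = \Q$ into surjectivity of $\chi_{\cyc}$ on $\Galk$. This is standard Galois theory: $\det$ kills $[G,G]$ by definition of the commutator, it kills $G \intersect \SL_2(\Zhat)$ trivially, and equality $[G,G] = G \intersect \SL_2(\Zhat)$ means the kernel of $\det$ on $G$ is exactly $[G,G]$, so $\det$ descends to an injection $G/[G,G] \hookrightarrow \Zhat^{\times}$ which is onto since $\det(G) = \Zhat^{\times}$. One then only has to remember that $\Q^{\cyc}$ is the fixed field of $\ker \chi_{\cyc}$ and that $\chi_{\cyc}$ identifies $\Gal(\Q^{\cyc}/\Q)$ with $\Zhat^{\times}$, so that the image of $\Galk$ under $\chi_{\cyc}$ is $\Gal(k\Q^{\cyc}/k) \cong \Gal(\Q^{\cyc}/(k \intersect \Q^{\cyc})) = \Gal(\Q^{\cyc}/\Q) = \Zhat^{\times}$. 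Everything else is a direct citation of \cref{prop:H=G}.
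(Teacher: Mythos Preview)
Your proposal is correct and follows essentially the same approach as the paper: identify $G/[G,G]$ with $\Zhat^{\times}$ via $\det$, use $k \intersect \Q^{\cyc} = \Q$ to get $\det(\rho_E(\Galk)) = \Zhat^{\times}$ so that condition \ref{proj2} of \cref{prop:H=G} is automatic, and then invoke that theorem. The paper's proof is just a terser version of exactly this; one small remark is that the cleanest justification for $\det(G) = \Zhat^{\times}$ here is not ``standing assumptions'' but rather the inclusion $\rho_E(\Galk) \subseteq G$ combined with the surjectivity of $\chi_{\cyc}$ on $\Galk$ that you establish anyway.
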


\begin{proof}

The quotient group $G/[G,G]$ is isomorphic to $\Zhat^{\times}.$ The assumption $k \intersect \Q^{cyc}=\Q$ on $k$ implies that $\det(\rho_E(\Galk))$ is equal to $\Zhat^{\times}.$ The proof now follows from Theorem \ref{prop:H=G}.  
\end{proof}

\begin{bibdiv} 
\begin{biblist}
\bib{MR0337993}{article}{
   author={Deligne, P.},
   author={Rapoport, M.},
   title={Les sch\'{e}mas de modules de courbes elliptiques},
   language={French},
   conference={
      title={Modular functions of one variable, II},
      address={Proc. Internat. Summer School, Univ. Antwerp, Antwerp},
      date={1972},
   },
   book={
      publisher={Springer, Berlin},
   },
   date={1973},
   pages={143--316. Lecture Notes in Math., Vol. 349},
   review={\MR{0337993}},
}

\bib{MR2778661}{article}{
    AUTHOR = {Greicius, Aaron},
     TITLE = {Elliptic curves with surjective adelic {G}alois
              representations},
   JOURNAL = {Experiment. Math.},
  FJOURNAL = {Experimental Mathematics},
    VOLUME = {19},
      YEAR = {2010},
    NUMBER = {4},
     PAGES = {495--507},
      ISSN = {1058-6458},
   MRCLASS = {11G05 (11F80)},
  MRNUMBER = {2778661},
MRREVIEWER = {N\'{u}ria Vila},
       DOI = {10.1080/10586458.2010.10390639},
       URL = {https://doi.org/10.1080/10586458.2010.10390639},
}

\bib{MR3350106}{article}{
    AUTHOR = {Jones, Nathan},
     TITLE = {{${\rm GL}_2$}-representations with maximal image},
   JOURNAL = {Math. Res. Lett.},
  FJOURNAL = {Mathematical Research Letters},
    VOLUME = {22},
      YEAR = {2015},
    NUMBER = {3},
     PAGES = {803--839},
      ISSN = {1073-2780},
   MRCLASS = {11G05},
  MRNUMBER = {3350106},
MRREVIEWER = {Hang Xue},
       DOI = {10.4310/MRL.2015.v22.n3.a10},
       URL = {https://doi.org/10.4310/MRL.2015.v22.n3.a10},
}

\bib {MR0568299}{book}{
    AUTHOR = {Serge Lang and Hale Trotter},
     TITLE = {Frobenius distributions in {${\rm GL}\sb{2}$}-extensions},
    SERIES = {Lecture Notes in Mathematics, Vol. 504},
      NOTE = {Distribution of Frobenius automorphisms in
              ${{\rm{G}}L}\sb{2}$-extensions of the rational numbers},
 PUBLISHER = {Springer-Verlag, Berlin-New York},
      YEAR = {1976},
     PAGES = {iii+274},
   MRCLASS = {12A50 (10K05)},
  MRNUMBER = {0568299},
MRREVIEWER = {G. Frey},
}

\bibitem[Rak21]{2105.14623}{
Rakvi,
\newblock \textit{A Classification of Genus 0 Modular Curves with Rational Points}, 2021;
\newblock arXiv:2105.14623.}

\bib{MR0387283}{article}{
   author={Serre, Jean-Pierre},
   title={Propri\'{e}t\'{e}s galoisiennes des points d'ordre fini des courbes
   elliptiques},
   language={French},
   journal={Invent. Math.},
   volume={15},
   date={1972},
   number={4},
   pages={259--331},
   issn={0020-9910},
   review={\MR{0387283}},
   doi={10.1007/BF01405086},
}

\bibitem[Zyw15]{1508.07663}David Zywina,
\newblock \textit{Possible indices for the Galois image of elliptic curves over $\mathbb{Q}$}, 2015;
\newblock arXiv:1508.07663.

\end{biblist}
\end{bibdiv}
\end{document}